\documentclass[preprint,12pt]{elsarticle}
\usepackage[utf8]{inputenc}
\usepackage[T1]{fontenc}
\usepackage{amsmath,amsthm}
\usepackage{amsfonts}
\usepackage{amssymb}
\usepackage{ textcomp }
\usepackage{tikz}
\usepackage{breqn}
\usepackage{dsfont}
\usepackage{pdfpages}
\usepackage{graphicx}
\usepackage{ upgreek }
\newtheorem{Definition}{Definition}
\newtheorem{Notation}{Notation}
\newtheorem{Theorem}{Theorem}

\begin{document}

\begin{frontmatter}
\title{Stability and Bifurcation Analysis of a Fractional Order Delay Differential Equation Involving Cubic Nonlinearity}
\author{Sachin Bhalekar\footnote{Corresponding Author Email: sachinbhalekar@uohyd.ac.in}, Deepa Gupta}
 \address{School of Mathematics and Statistics, University of Hyderabad, Hyderabad, 500046 India}
\begin{abstract}
Fractional derivative  and delay are important tools in modeling memory properties in the natural system. This work  deals with the stability analysis of a fractional order delay differential equation
\begin{equation*}
D^\alpha x(t)=\delta x(t-\tau)-\epsilon x(t-\tau)^3-px(t)^2+q x(t).
\end{equation*}
We provide linearization of this system in a neighbourhood of equilibrium points and propose linearized  stability conditions. To discuss the stability of equilibrium points, we propose various conditions on the parameters $\delta$, $\epsilon$, $p$, $q$ and $\tau$. Even though there are five parameters involved in the system, we are able to provide the stable region sketch in the $q\delta-$plane for any positive $\epsilon$ and $p$. This provides the complete analysis of stability of the system. Further, we investigate chaos in the proposed model. This system exhibits chaos for a wide range of delay parameter.
\end{abstract}
\end{frontmatter}
\section{Introduction}
The non-local operators and the delay are the crucial tools in modeling memory properties in the natural system \cite{baleanu2015chaos,bhalekar2010fractional,bhalekar2011fractional1}. The non-local operator viz. fractional order derivative(FD) is widely analyzed and applied by Scientists and Engineers \cite{debnath2003recent,lai2016investigation}. The flexible order (integer, real, complex numbers as well as functions) is yet another reason to employ the FD in the systems which show an intermediate behaviour e.g. viscoelasticity \cite{torvik1984appearance}.\\
Fractional calculus(FC) is used to model diffusion by Mainardi \cite{mainardi1996fractional,mainardi1996fundamental}, Wyss \cite{wyss1986fractional}, Luchko \cite{luchko2009maximum}, Daftardar- Gejji and coworkers \cite{jafari2006solving,daftardar2008solving}.\\
Magin \cite{magin2004fractional,magin2010fractional} presented ample number of applications of FC in bioengineering and related areas.\\
The FD is proved useful in designing robust controllers and other engineering applications \cite{lai2016investigation,monje2010fractional,chen2009fractional,yang2016fractional,chen2022fractional}.\\
Existence and uniqueness of solutions of fractional differential equations is discussed in \cite{delbosco1996existence,zhou2009existence,Daftardar2004analysis}.\\
Matignon proposed stability results of FDEs in his seminal work \cite{matignon1996stability}.\\
Since the past values of state are also included in the model, the delay differential equations become an infinite dimensional dynamical systems \cite{smith2011introduction,lakshmanan2011dynamics}.\\
The delay models are observed in various phenomena \cite{smith2011introduction,namajunas1995stabilization,bocharov2000numerical}.\\
The fractional order delay differential equations(FDDE) contain FD as well as the delay. The stability analysis of FDDE is presented by Bhalekar in \cite{bhalekar2016stability,daftardar2014new,bhalekar2013stability}.\\
Exact and discritized  stability of linear FDDEs is discussed in \cite{kaslik2012analytical} by  Kaslik and Sivasundaram.\\
Stabilization problem of neutral FDDEs is given in \cite{bonnet2007stabilization}. Various numerical schemes \cite{daftardar2015solving,bhalekar2011predictor,shi2020new,yuttanan2021legendre} are designed  by the researchers.\\
Some issues related with the initialization of FDDEs are examined in \cite{garrappa2020initial}.\\
In this work, we propose the stability results of a FDDE involving a cubic nonlinearity. The Section \ref{sec1.1} deals with the preliminaries. Stability results are proposed in Section \ref{sec1.2}. Section \ref{sec1.5} provide stable region for an equilibrium point. We analyze the chaos in proposed system in the Section \ref{sec1.3}. Finally, conclusions are given in Section \ref{sec1.4}.

\section{Preliminaries}\label{sec1.1}
In this section, we present definitions available in the literature\cite{smith2011introduction,lakshmanan2011dynamics,podlubny1998fractional,vukic2003nonlinear,diethelm2002analysis}.

\begin{Definition}[Fractional Integral]
For any $f \in \mathcal{L}^{1}(0,b)$ the Riemann-Liouville fractional integral of order $\upmu >0$, is given by 

\begin{equation*}
\textit{I}^\upmu f(t)=\dfrac{1}{\Gamma(\upmu)}\int_{0}^{t}(t-\tau)^{\upmu-1}f(\tau)d\tau  , \quad   0<t<b.
\end{equation*}

\end{Definition}

\begin{Definition}[Caputo Fractional Derivative]
For $f \in \mathcal{L}^{1}(0,b)$, $0<t<b$ and $m-1<\upmu\leq m$, $m \in \mathbb{N}$, the Caputo fractional derivative of function $f$ of order $\upmu$ is defined by,
\[\textit{D}^{\upmu} f(t)=
\begin{cases}
\frac{d^m}{dt^m} f(t) ,\textit{ if } \quad \upmu = m \\ \textit{I}^{m-\upmu}\dfrac{d^m f(t)}{dt^m}, \textit{ if } \quad  m-1< \upmu < m. 
\end{cases}\]
 Note that for  $m-1 < \upmu \leq m$, $m\in \mathbb{N},$

\[\textit{I}^\upmu\textit{D}^\upmu f(t)=f(t)-\sum_{k=0}^{m-1}\dfrac{d^k f(0)}{dt^k}\dfrac{t^k}{k!}.\]

\end{Definition}
\begin{Definition}[Equilibrium Point]
 Consider the generalized delay differential equation
  \begin{equation}
  D^\alpha{x}(t)=f(x(t),x(t-\tau)),  \quad 0<\alpha\leq 1,\label{aa}
  \end{equation}
  where $\tau \textgreater0$,  $f: E\rightarrow \mathbb{R}$, $E\subseteq \mathbb{R}^2$ is open and $f\in C^1(E)$.

  A steady state solution of equation (\ref{aa}) is called an equilibrium point.
  
  Note that $x^*$ is an equilibrium point if and only if
  \begin{equation}
  f(x^*,x^*)=0.\label{gg}
  \end{equation}
\end{Definition}
Consider the initial-value problem for the nonautonoumous delay differential equation (\ref{aa}) with the initial data
\begin{equation}
x(t)=\phi(t), -\tau\leq t\leq 0,\textit{ where } \phi:[-\tau,0]\rightarrow \mathbb{R}. \label{cc}
\end{equation}
\begin{Notation}
The solution of delay differential equation (\ref{aa}) with initial data (\ref{cc}) is denoted by $x(t,\phi).$

The norm of $\phi$ is given by 
\begin{equation*}
||\phi||=\sup_{-\tau\leq t \leq 0}|\phi(t)|.
\end{equation*}
\end{Notation}
\begin{Definition}
An equilibrium point $x^*$ of equation (\ref{aa}) is stable if for any given $\epsilon \textgreater 0$, there exist $\delta \textgreater 0$ such that $||\phi-x^*||\textless \delta \Rightarrow |x(t,\phi)-x_*|\textless \epsilon, \quad t\geq 0.$ \\
\end{Definition}

\begin{Definition}
An equilibrium point $x^*$ is asymptotically stable if it is stable and there exists $b_0\textgreater 0$ such that $||\phi-x^*||\textless b_0 \Rightarrow \lim_{t\longrightarrow\infty}x(t,\phi)=x^*$.
\end{Definition}
\begin{Definition}
Equilibrium point which is not stable is called unstable.\\ 
\end{Definition}

\begin{Theorem}\label{11}
\cite{bhalekar2016stability} Suppose $x^*$ is an equilibrium solution of the fractional order delay differential equation
\begin{equation*}
D^{\alpha} x(t)=ax(t)+b x(t-\tau). \label{12}
\end{equation*}
\textbf{Case 1} If b $\in (-\infty,-|a|)$ then the stability region of $x^*$ in $(\tau,a,b)$ parameter space is located between the plane $\tau=0$ and
\begin{equation}\label{zzzzz}
 \tau_*=\dfrac{\arccos\Bigg(\dfrac{\Bigg(acos\Big(\dfrac{\alpha\pi}{2}\Big)+\sqrt{b^2-a^2\sin^2\Big(\dfrac{\alpha\pi}{2}}\Big)\Bigg)\cos\dfrac{\alpha\pi}{2}-a}{b}\Bigg)}{\Bigg(a\cos\Big(\dfrac{\alpha\pi}{2}\Big)\pm\sqrt{b^2-a^2\sin^2\Big(\dfrac{\alpha\pi}{2}\Big)}\Bigg)^{1/\alpha}}.
 \end{equation}
The equation undergoes Hopf bifurcation at this value.\\
 \textbf{Case 2} If $b\in (-a,\infty)$ then $x^*$ is unstable for any $\tau\geq 0.$\\
 \textbf{Case 3} If $b\in (a,-a)$ and $a<0$ then $x^*$ is stable for any $\tau\geq 0.$
 \end{Theorem}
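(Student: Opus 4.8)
The plan is to study the roots of the characteristic equation associated with the linearised equation and to track how they cross the imaginary axis as $\tau$ grows. Seeking solutions of the form $x(t)=e^{\lambda t}$ (equivalently, taking the Laplace transform of the Caputo equation) produces the characteristic function
\begin{equation*}
\Delta(\lambda)=\lambda^{\alpha}-a-b\,e^{-\lambda\tau},
\end{equation*}
and $x^{*}$ is asymptotically stable precisely when every root of $\Delta(\lambda)=0$, taken on the principal branch of $\lambda^{\alpha}$, lies in the open left half-plane $\mathrm{Re}(\lambda)<0$; this is the delay analogue of Matignon's criterion \cite{matignon1996stability}. First I would settle the base case $\tau=0$, where $\Delta$ reduces to $\lambda^{\alpha}=a+b$: since $a+b$ is real, a root with $\mathrm{Re}(\lambda)\ge0$ exists only when $a+b>0$, so $x^{*}$ is stable at $\tau=0$ iff $a+b<0$. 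This disposes of Case 2 at once: there $b>-a$ gives $a+b>0$, and moreover $\Delta(0)=-(a+b)<0$ while $\Delta(\lambda)\to+\infty$ as $\lambda\to+\infty$ along the reals, so by the intermediate value theorem $\Delta$ has a positive real root for every $\tau\ge0$, forcing instability independent of the delay.

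Next I would locate the stability boundaries by searching for purely imaginary roots $\lambda=i\omega$ with $\omega>0$ (roots occur in conjugate pairs, so $\omega<0$ is covered by symmetry). Writing $\lambda^{\alpha}=\omega^{\alpha}\bigl(\cos\tfrac{\alpha\pi}{2}+i\sin\tfrac{\alpha\pi}{2}\bigr)$ and separating $\Delta(i\omega)=0$ into real and imaginary parts gives
\begin{equation*}
\omega^{\alpha}\cos\tfrac{\alpha\pi}{2}=a+b\cos(\omega\tau),\qquad \omega^{\alpha}\sin\tfrac{\alpha\pi}{2}=-b\sin(\omega\tau).
\end{equation*}
Squaring and adding eliminates $\tau$ and yields a quadratic in $u=\omega^{\alpha}$,
\begin{equation*}
u^{2}-2a\cos\tfrac{\alpha\pi}{2}\,u+\bigl(a^{2}-b^{2}\bigr)=0,\qquad u=a\cos\tfrac{\alpha\pi}{2}\pm\sqrt{b^{2}-a^{2}\sin^{2}\tfrac{\alpha\pi}{2}},
\end{equation*}
whose discriminant is exactly $b^{2}-a^{2}\sin^{2}\tfrac{\alpha\pi}{2}$. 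A crossing frequency exists only when this discriminant is nonnegative and $u>0$. Solving the real/imaginary pair back for the delay then gives $\omega\tau=\arccos\!\bigl((\omega^{\alpha}\cos\tfrac{\alpha\pi}{2}-a)/b\bigr)$, and substituting $\omega=u^{1/\alpha}$ produces the announced expression for $\tau_{*}$.

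The sign analysis of the quadratic distinguishes the remaining cases. In Case 3 ($a<b<-a$, $a<0$) one has $b^{2}<a^{2}$, so the product of roots $a^{2}-b^{2}>0$ while the sum $2a\cos\tfrac{\alpha\pi}{2}<0$; hence no admissible positive $u$ exists, there is no imaginary crossing for any $\tau$, and the stability established at $\tau=0$ persists for all $\tau\ge0$. In Case 1 ($b<-|a|$) the product $a^{2}-b^{2}<0$, so the quadratic has exactly one positive root $u_{+}=a\cos\tfrac{\alpha\pi}{2}+\sqrt{b^{2}-a^{2}\sin^{2}\tfrac{\alpha\pi}{2}}$, giving a unique crossing frequency $\omega=u_{+}^{1/\alpha}$ and the smallest positive crossing delay $\tau_{*}$ recorded in (\ref{zzzzz}); since $x^{*}$ is stable at $\tau=0$, it remains stable on $[0,\tau_{*})$ and loses stability at $\tau_{*}$.

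To finish Case 1 I would verify that $\tau_{*}$ is genuinely a Hopf bifurcation point by checking the transversality condition, that is, computing $\mathrm{Re}\,(d\lambda/d\tau)$ at $\lambda=i\omega$ through implicit differentiation of $\Delta(\lambda)=0$ and showing it is positive, so that the conjugate pair crosses transversally into $\mathrm{Re}(\lambda)>0$. The main obstacle is the care demanded by the fractional power: because $\lambda^{\alpha}$ carries a branch cut, one must work consistently on the principal Riemann sheet, confirm that $\tau_{*}$ is indeed the \emph{first} delay at which any root reaches the imaginary axis (so that no root crosses earlier), and fix the branch in the $\arccos$ so that the formula returns the correct least positive delay. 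Establishing this minimality, together with the transversality computation, is the delicate heart of the argument; the algebraic reduction to the quadratic in $\omega^{\alpha}$ is routine by comparison.
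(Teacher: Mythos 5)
Your proposal is correct and follows essentially the same route as the source: the paper itself does not prove Theorem \ref{11} but imports it from \cite{bhalekar2016stability}, where the argument is exactly this characteristic-equation analysis — reduce to $\lambda^{\alpha}-a-be^{-\lambda\tau}=0$, settle $\tau=0$ via the sign of $a+b$, locate imaginary-axis crossings through the quadratic $u^{2}-2a\cos\frac{\alpha\pi}{2}\,u+(a^{2}-b^{2})=0$ in $u=\omega^{\alpha}$, and read off $\tau_{*}$ from the resulting $\arccos$ expression. Your case split (no admissible positive root in Case 3, exactly one in Case 1, a persistent positive real root in Case 2) and your identification of transversality and branch/minimality checks as the remaining work to be carried out are consistent with that reference.
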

 \vspace{0.5cm}
 \textbf{ Note: } In Case 1, we say that $x^*$ is delay dependent stable.
\subsection{Linearization near equilibrium \cite{bhalekar2016stability}} 
Let $x(t)$ be a solution of the generalized fractional delay differential equation (\ref{aa}) perturbed infinitesimally from the equilibrium solution. Let $\xi(t)=x(t)-x^*$. Then by using first order Taylor's approximation, we get a linearized equation of (\ref{aa}) as
\begin{gather*}
D^{\alpha}\xi = D^\alpha x\\
           \hspace{1.5cm}= f(x(t),x(t-\tau))\\
           \hspace{2.7cm}= f(x^*+\xi(t),x^*+\xi(t-\tau))\\
           \hspace{4.6cm}= f(x^*,x^*)+\partial_1 f(x^*,x^*)\xi(t)+\partial_2 f(x^*,x^*) \xi (t-\tau).
           \end{gather*} 
           \begin{equation}
         \therefore D^\alpha\xi  = a\xi+ b\xi(t-\tau),\label{yy}
         \end{equation}
where $a=\partial_1 f$, b=$\partial_2 f$ are partial derivative of $f$ with respect to the first and second variables evaluated at $(x^*,x^*)$, respectively.
Equation (\ref{yy}) is local linearization of equation (\ref{aa}) near $x^*$. The trajectories of the generalized fractional order delay differential equation (\ref{aa}) in the neighbourhood of an equilibrium point have the same form as the trajectories of equation (\ref{yy})
 \cite{lakshmanan2011dynamics,vukic2003nonlinear}.
\section{Main Results}\label{sec1.2}
We propose the following model
\begin{equation}
D^\alpha x(t)=\delta x(t-\tau)-\epsilon (x(t-\tau))^3-p(x(t))^2+qx(t),\label{31}
\end{equation}
where $\delta$, $\epsilon$, $p$ and $q$ are all real numbers.\\
 The non-linear terms in this equation are $(x(t-\tau))^3$ and $(x(t))^2.$\\
 If $p=0$ and $q=0$ then this is an Ucar system \cite{uccar2002prototype}. Such models occur in many physical models \cite{bhalekar2012dynamical,strogatz2018nonlinear}.\\ In this case $f(x(t),x(t-\tau))=\delta x(t-\tau)-\epsilon (x(t-\tau))^3-p(x(t))^2+qx(t).$ Therefore $a=-2px+q$ and $b=\delta-3\epsilon (x-\tau)^2$. The corresponding equilibrium points are $x_1^*=0$ and $x_{2,3}^*=\dfrac{-p\pm\sqrt{p^2+4\epsilon(\delta+q)}}{2\epsilon}.$ Note that for the existence of equilibrium points $x_{2,3}^*$ we need $p^2+4\epsilon(\delta+q)\geq 0.$    
\subsection{Stability and bifurcation analysis of equilibrium point $x_1^*$.}
For $x_1^*=0$, we have $a=q$  and $b=\delta$. \\
\begin{Theorem}\label{thm1.1}
If $\delta+q>0$, then the equilibrium point $x_1^*$ is unstable for all $\tau\geq0.$
\end{Theorem}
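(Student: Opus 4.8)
The plan is to reduce the statement to a direct application of the cited linear stability result, Theorem \ref{11}. First I would record the linearization of (\ref{31}) at the equilibrium $x_1^*=0$. Using the expressions $a=-2px+q$ and $b=\delta-3\epsilon x^2$ evaluated at $x=0$, the local linearization (\ref{yy}) becomes
\[
D^\alpha \xi = q\,\xi(t) + \delta\,\xi(t-\tau),
\]
so that in the notation of Theorem \ref{11} the coefficients are $a=q$ and $b=\delta$.

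Next I would translate the hypothesis $\delta+q>0$ into the parameter regime of Theorem \ref{11}. The inequality $\delta+q>0$ is equivalent to $\delta>-q$, i.e. $b>-a$, which is precisely the condition $b\in(-a,\infty)$ defining \textbf{Case 2} of Theorem \ref{11}. That case asserts that the zero solution of the linear equation is unstable for every $\tau\geq0$, independent of the delay.

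Finally, I would invoke the linearization principle recorded just after equation (\ref{yy}): since the trajectories of (\ref{31}) in a neighbourhood of $x_1^*$ have the same qualitative form as those of its linearization, the instability of the linear system transfers to instability of the equilibrium $x_1^*$ of the full nonlinear system, for all $\tau\geq0$. This completes the argument.

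I do not anticipate a genuine obstacle, as the result is essentially a corollary of Theorem \ref{11}. The only point that requires care is confirming that the hypothesis maps cleanly onto \textbf{Case 2} and does not stray into the boundary $b=-a$ or the delay-dependent regime of \textbf{Case 1}; this is immediate, because the strict inequality $\delta+q>0$ places the pair $(a,b)=(q,\delta)$ strictly inside the open half-space $b>-a$, so no boundary or borderline analysis is needed.
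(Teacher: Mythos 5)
Your argument is correct and is essentially the paper's own proof: the authors likewise compute $a=q$, $b=\delta$ at $x_1^*=0$, observe that $\delta+q>0$ gives $b\in(-a,\infty)$, and invoke Case 2 of Theorem \ref{11}. Your additional remarks on the linearization principle and on avoiding the boundary $b=-a$ only make explicit what the paper leaves implicit.
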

\begin{proof}
 For $\delta+q>0$, we have $\delta>-q$.\\Therefore, $\delta\in(-q,\infty)$ which implies that $b\in(-a,\infty)$.\\
So, by Theorem (\ref{11}) Case (2), $x_1^*$ is unstable for all $\tau\geq 0$ which completes the proof.
\end{proof}
\vspace{0.5cm}
The illustration of this Theorem is given in Figure (\ref{21}) by setting $\delta=2$, $\epsilon=1$, $p=1$, $\tau=0.5$ and $q=1$.
\begin{Theorem}\label{thm1.2} 
If $\delta+q < 0 $ and $\delta \geqslant q$, then $x_1^*$ is asymptotically stable $\forall \tau \geqslant 0$.
\end{Theorem}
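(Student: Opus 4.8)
The plan is to reduce the nonlinear stability question at $x_1^*=0$ to the linear trichotomy of Theorem \ref{11}. Following the linearization already recorded in the text, the perturbation equation is $D^\alpha\xi = q\,\xi(t)+\delta\,\xi(t-\tau)$, so the coefficients appearing in Theorem \ref{11} are $a=q$ and $b=\delta$. Everything then comes down to deciding which of the three cases contains the pair $(a,b)=(q,\delta)$, and this is dictated entirely by the two hypotheses. The inequality $\delta+q<0$ rewrites as $\delta<-q$, that is $b<-a$, which rules out Case 2; the inequality $\delta\geq q$ is exactly $b\geq a$, which rules out Case 1. Moreover, chaining $q\leq\delta<-q$ forces $q<-q$, hence $q<0$, so that $a=q<0$. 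Thus $(a,b)$ satisfies $a\leq b<-a$ with $a<0$.

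For the open part $a<\delta<-a$, equivalently $\delta>q$, this is precisely the hypothesis of Theorem \ref{11}, Case 3, and I would invoke it directly to conclude asymptotic stability for every $\tau\geq0$. The step I expect to be the genuine obstacle is the boundary value $\delta=q$, i.e. $b=a<0$: this lies on the common edge of Cases 1 and 3 and is not covered by the open interval in the statement of Theorem \ref{11}, so it must be treated by hand.

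For that boundary case I would argue directly from the characteristic equation $\lambda^\alpha=a\bigl(1+e^{-\lambda\tau}\bigr)$, using the same purely-imaginary-root computation that underlies formula \eqref{zzzzz}. Substituting $\lambda=i\omega$ with $\omega>0$ and separating real and imaginary parts yields, after squaring and adding, the quadratic $\omega^{2\alpha}-2a\cos\bigl(\tfrac{\alpha\pi}{2}\bigr)\omega^{\alpha}+(a^2-b^2)=0$ in the unknown $\omega^\alpha$, whose only candidate positive root is $\omega^\alpha=a\cos\bigl(\tfrac{\alpha\pi}{2}\bigr)+\sqrt{b^2-a^2\sin^2\bigl(\tfrac{\alpha\pi}{2}\bigr)}$. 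Since $a<0$, this quantity is positive only when $b^2>a^2$; but $b=a$ gives $b^2=a^2$, so the root degenerates to $\omega=0$ and no purely imaginary crossing with $\omega>0$ exists. Because at $\tau=0$ the equation reduces to $\lambda^\alpha=a+b=2a<0$, which is asymptotically stable by Matignon's criterion, and because $\lambda=0$ cannot be a root (it would force $a+b=2a=0$, contradicting $a<0$), no stability switch can occur as $\tau$ increases. Hence asymptotic stability persists for all $\tau\geq0$ when $\delta=q$ as well, which together with the interior case completes the argument.
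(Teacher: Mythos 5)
Your proposal follows the same route as the paper for the main case: linearize at $x_1^*=0$ to get $a=q$, $b=\delta$, observe that the hypotheses force $q<0$ and $q\leq\delta<-q$, and invoke Case 3 of Theorem \ref{11}. Where you go beyond the paper is the boundary value $\delta=q$: the paper's proof asserts that the hypotheses give $\delta\in(q,-q)$, which silently drops the endpoint allowed by the stated hypothesis $\delta\geq q$, and Case 3 of Theorem \ref{11} is formulated only for the open interval $b\in(a,-a)$. Your separate treatment of $b=a<0$ via the characteristic equation is correct: the quadratic in $\omega^\alpha$ obtained by squaring and adding the real and imaginary parts has roots $a\cos(\alpha\pi/2)\pm|a|\cos(\alpha\pi/2)$, neither of which is positive, so no purely imaginary root exists for any $\tau$; combined with stability at $\tau=0$ (where $\lambda^\alpha=2a<0$ has no admissible root on the principal branch) and the impossibility of $\lambda=0$, no switch can occur. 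So your argument is not just correct but strictly more complete than the paper's at that edge case; the cost is only the extra half-page of characteristic-equation bookkeeping, which the paper avoids by (implicitly) treating the hypothesis as if it were the strict inequality $\delta>q$.
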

\begin{proof}
 The conditions $\delta+q<0$ and $\delta\geq q$ hold only when $q$ is negative and $\delta\in(q,-q)$. Therefore by Theorem (\ref{11}) Case (3), the equilibrium point $x_1^*$ is stable for all $\tau\geq0.$ This completes the proof.
\end{proof}
This result is verified in Figure (\ref{22}) by putting  $\delta=2$, $\epsilon=1$, $p=1$, $q=-3$ and $\tau=0.8.$
\begin{Theorem}\label{thm4}
If $\delta+q < 0 $, $\delta < q $, then there exists
\begin{eqnarray}
\tau_*=
\dfrac{\arccos\Bigg(\dfrac{\Bigg(q cos\Big(\dfrac{\alpha\pi}{2}\Big)+\sqrt{\delta^2-q^2\sin^2\Big(\dfrac{\alpha\pi}{2}}\Big)\Bigg)\cos\dfrac{\alpha\pi}{2}-q}{\delta}\Bigg)}{\Bigg(q\cos\Big(\dfrac{\alpha\pi}{2}\Big)\pm\sqrt{\delta^2-q^2\sin^2\Big(\dfrac{\alpha\pi}{2}\Big)}\Bigg)^{1/\alpha}}\label{ccccccccccc}
\end{eqnarray} 
such that the equilibrium point $x_1^*$ is asymptotically stable for $0 < \tau < \tau_*$ and unstable for $\tau > \tau_*$.
\end{Theorem}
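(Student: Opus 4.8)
The plan is to reduce this theorem to Case 1 of Theorem \ref{11} by checking that the linearized coefficients at $x_1^*$ fall into the regime $b \in (-\infty, -|a|)$. Recall that for $x_1^* = 0$ we have $a = q$ and $b = \delta$, so the task is to show that the two hypotheses $\delta + q < 0$ and $\delta < q$ together force $\delta < -|q|$.

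First I would rewrite the hypothesis $\delta + q < 0$ as $\delta < -q$. Combining this with the second hypothesis $\delta < q$ yields $\delta < \min\{-q, q\}$. Since $\min\{-q, q\} = -|q|$ for every real $q$ (if $q \geq 0$ the minimum is $-q = -|q|$, and if $q < 0$ it is $q = -|q|$), I conclude $\delta < -|q|$, i.e.\ $b < -|a|$. Hence $b \in (-\infty, -|a|)$, so the linearization \eqref{yy} at $x_1^*$ satisfies precisely the hypothesis of Case 1 of Theorem \ref{11}.

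Next I would invoke Theorem \ref{11} Case 1 directly. That case guarantees a critical delay obtained from formula \eqref{zzzzz} by inserting the relevant $a$ and $b$; substituting $a = q$ and $b = \delta$ reproduces exactly the expression \eqref{ccccccccccc} defining $\tau_*$. The corresponding stability region in the $(\tau, a, b)$ parameter space lies between the plane $\tau = 0$ and the surface $\tau = \tau_*$, so $x_1^*$ is asymptotically stable for $0 < \tau < \tau_*$ and unstable for $\tau > \tau_*$, with a Hopf bifurcation occurring at $\tau = \tau_*$, as claimed.

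The only step requiring genuine care is the inequality bookkeeping of the second paragraph, namely the identity $\min\{-q, q\} = -|q|$, since this is what converts the two stated sign conditions into the single Case 1 condition; everything afterward is a direct citation of Theorem \ref{11}. I would additionally remark that $\delta < -|q|$ guarantees $\delta^2 > q^2 \geq q^2 \sin^2(\alpha\pi/2)$, so the radicand under the square root in \eqref{ccccccccccc} is positive and $\tau_*$ is well defined and real.
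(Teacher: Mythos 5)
Your proposal is correct and follows essentially the same route as the paper: both reduce the hypotheses $\delta+q<0$ and $\delta<q$ to the single condition $\delta<-|q|$ (i.e.\ $b\in(-\infty,-|a|)$ with $a=q$, $b=\delta$) and then cite Case 1 of Theorem \ref{11}. Your explicit $\min\{-q,q\}=-|q|$ bookkeeping and the remark that $\delta^2>q^2\sin^2(\alpha\pi/2)$ makes the radicand positive are slightly more careful than the paper's one-line justification, but the argument is the same.
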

\begin{proof} If $q+\delta<0$ and $\delta<q$ then $\delta$ is always negative. Therefore,  $\delta\in (-\infty,-|q|)$ and hence the condition in the Case (1) of Theorem (\ref{11}) is satisfied. Therefore there exists the critical value of delay $\tau_*$ given by the equation (\ref{ccccccccccc})
and $x_1^*$ is delay dependent stable.
\end{proof}
\vspace{0.5cm}
To verify this result, we take $\delta=-3,\quad\epsilon=1,\quad p=1\quad and \quad q=-2$ which satisfy the conditions given in the Theorem \ref{thm4}. In this case, the critical value of delay is $\tau_*\approx 1.0690$. So, for $\tau=0.8$, we get stable solution (cf. Figure (\ref{24})) whereas for $\tau=1.1$ we get unstable solution (cf. Figure (\ref{23})).\vspace{0.5cm}

\textbf{Note:} The stability of equilibrium point $x_1^*$ is independent of  values of the parameters $p$ and $q.$ \\
We also summarise the Theorems \ref{thm1.1}, \ref{thm1.2} and \ref{thm4} in Figure (\ref{fig1.111}).
 
 \begin{figure}
 \centering
\includegraphics[width=0.6\textwidth]{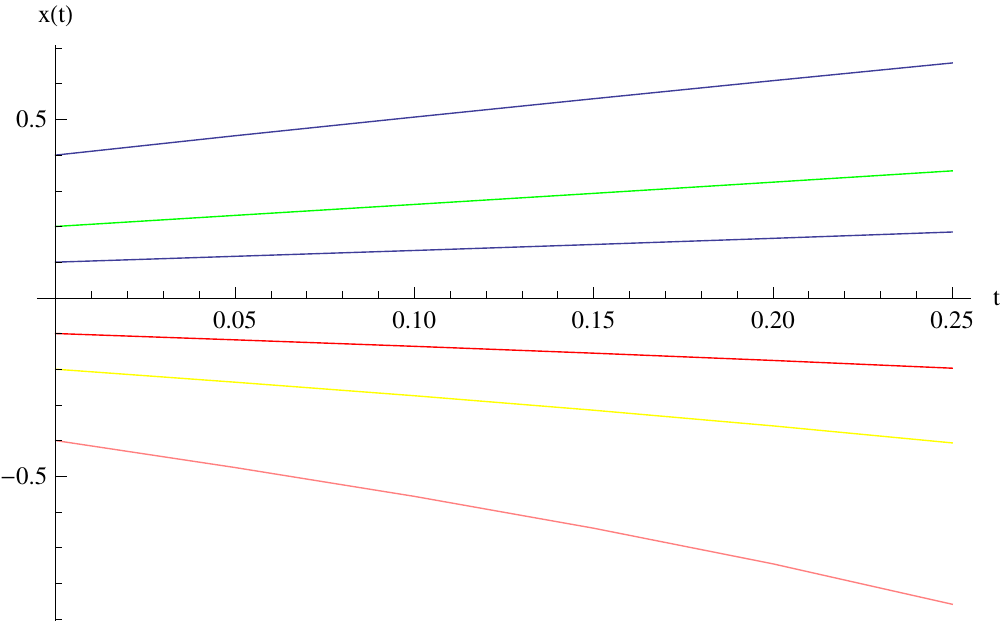}
\caption{ $x_1^*$ is unstable for $\tau=0.5$, $\delta=2$, $\epsilon=1$, $p=1$ and $q=1$}\label{21}
\end{figure}
\begin{figure}
 \centering
\includegraphics[width=0.6\textwidth]{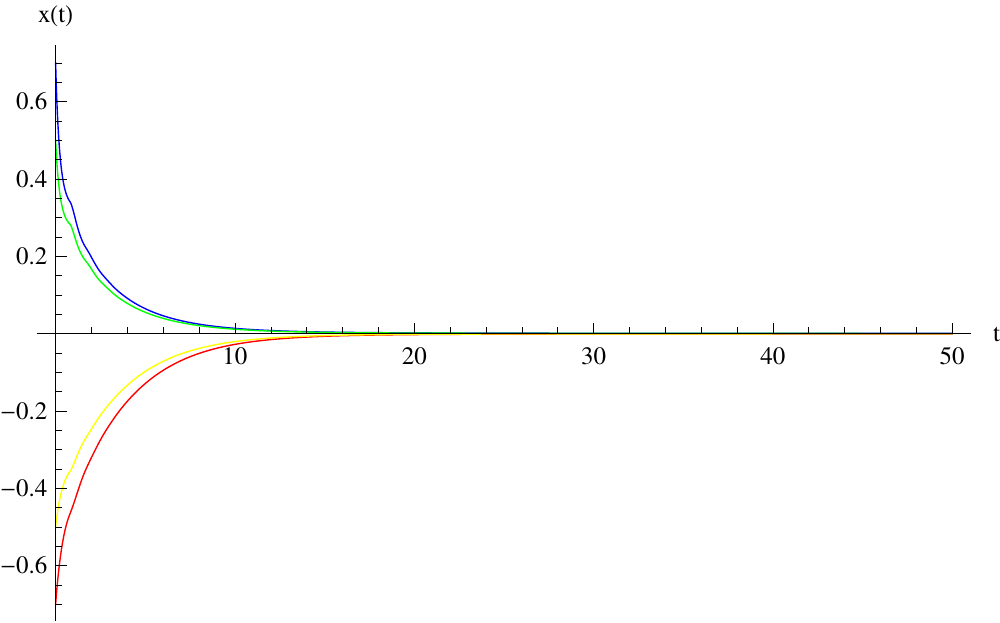}
\caption{For $\tau=0.8$, $\delta=2$, $\epsilon=1$, $p=1$ and $q=-3$, $x_1^*$ is stable}\label{22}
\end{figure}
\begin{figure}
 \centering
\includegraphics[width=0.6\textwidth]{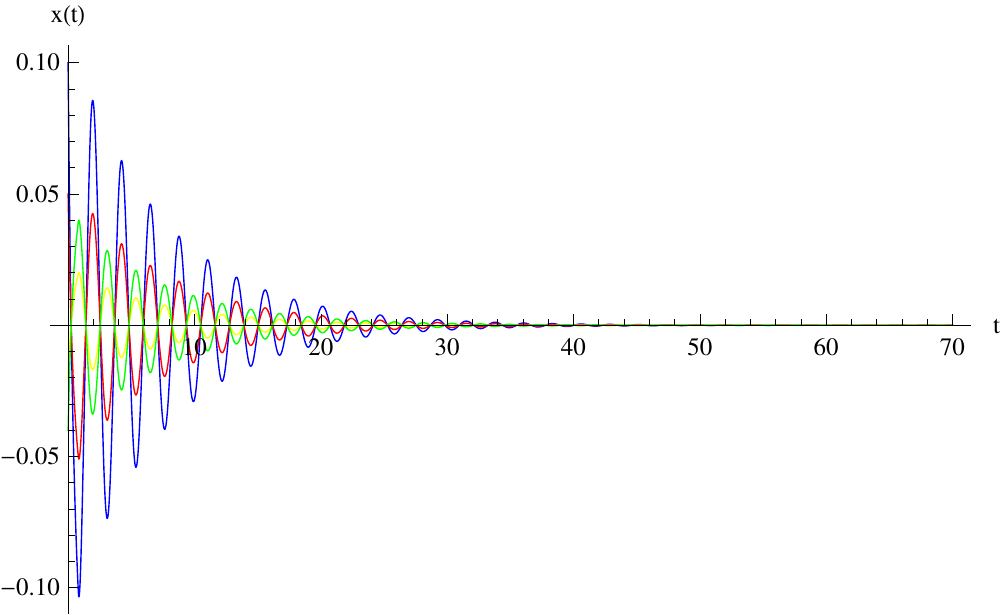}
\caption{ $x_1^*$ is stable for $\tau=0.8$, $\delta=-3$, $\epsilon=1$, $p=1$ and $q=-2$}\label{24}
\end{figure}
\begin{figure}
 \centering
\includegraphics[width=0.6\textwidth]{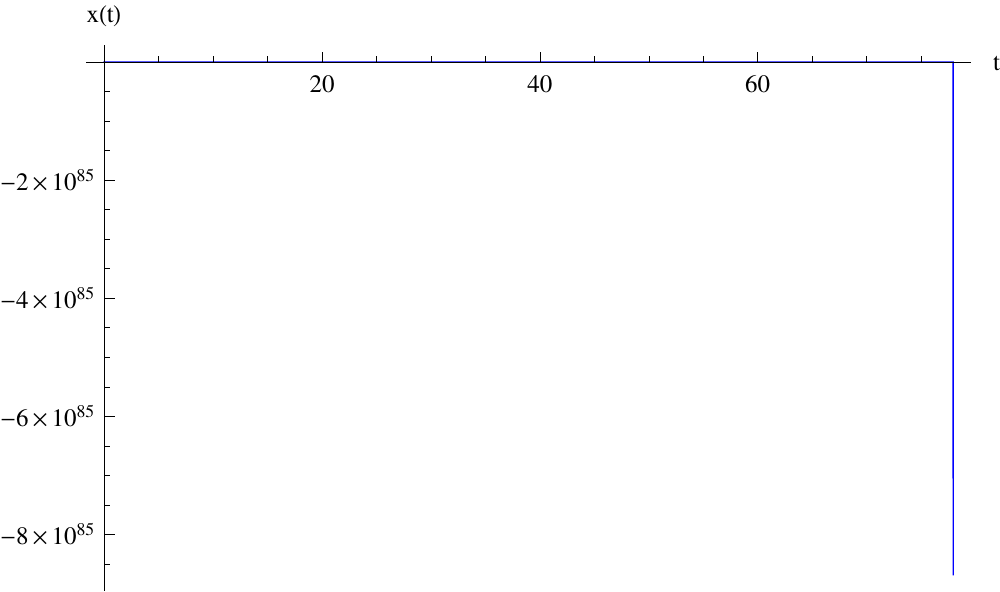}
\caption{Unstable equilibrium point $x_1^*$ for $\tau=1.1$, $\delta=-3$, $\epsilon=1$, $p=1$ and $q=-2$}\label{23}
\end{figure}
\begin{figure}
 \centering
\includegraphics[width=0.6\textwidth]{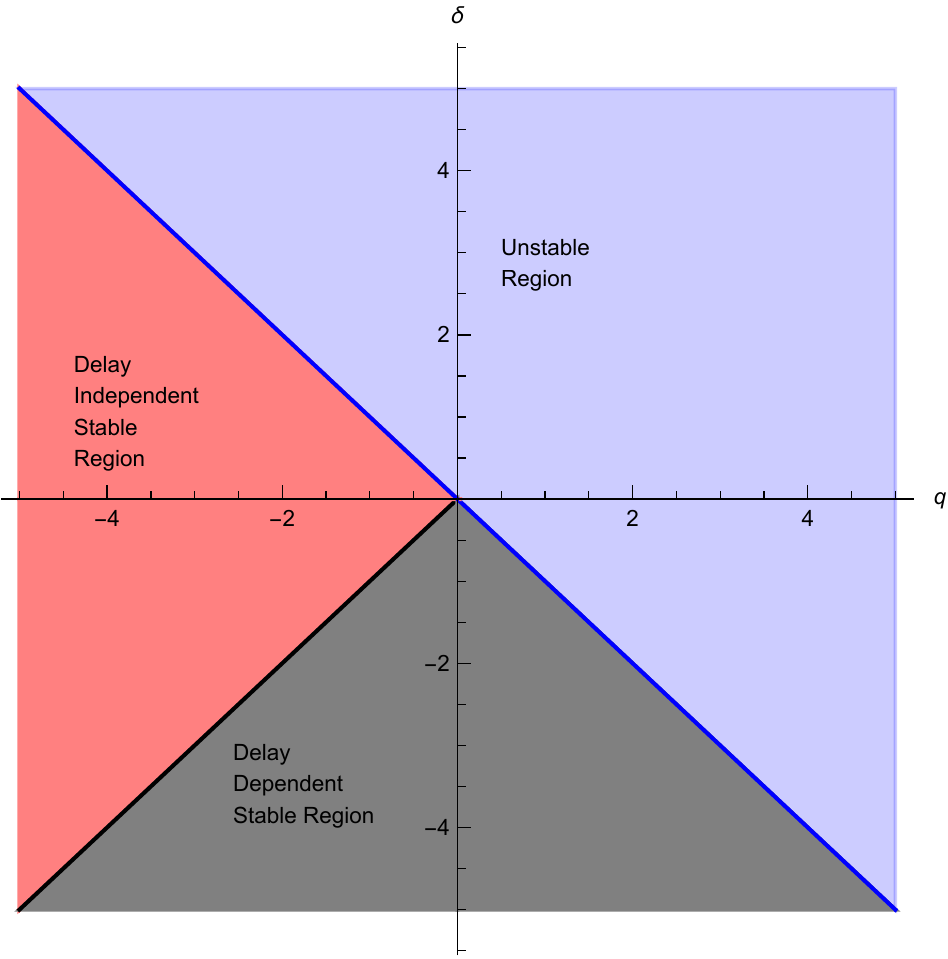}
\caption{Stability regions for $x_1^*$ in $\delta$ and $q$ plane}\label{fig1.111}
\end{figure}

\subsection{Stability and bifurcation analysis of $x_2^*$}
We have 
\begin{equation} 
	a=\dfrac{p^2}{\epsilon}-\dfrac{p\sqrt{p^2+4\epsilon(\delta+q)}}{\epsilon}+q\label{7777777777}
 \end{equation} 
and 
\begin{equation}
	b=\delta-\dfrac{3[2p^2+4\epsilon(\delta+q)-2p\sqrt{p^2+4\epsilon(\delta+q)}]}{4\epsilon}\label{9999999999}
 \end{equation}
 at $x_2^*$. Hence, we get
\begin{equation}
 a+b=\dfrac{\sqrt{p^2+4\epsilon(\delta+q)}[p-\sqrt{p^2+4\epsilon(\delta+q)}]}{2\epsilon}.\label{rrrrrrrr}
\end{equation} 
\begin{Theorem}\label{4444}
If $\epsilon>0$, $p>0$ and $0<-q<\delta<-2q$, then the equilibrium point $x_2^*$ is asymptotically stable for all $\tau\geq0$.
\end{Theorem}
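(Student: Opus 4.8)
The plan is to reduce the claim to \textbf{Case 3} of Theorem (\ref{11}) applied to the linearization (\ref{yy}) at $x_2^*$. Concretely, with $a$ and $b$ the coefficients given in (\ref{7777777777}) and (\ref{9999999999}), it suffices to verify the two strict inequalities $a<0$ and $a<b<-a$; once these hold, Theorem (\ref{11}) Case 3 delivers asymptotic stability for every $\tau\ge 0$. Throughout I would abbreviate $R=\sqrt{p^2+4\epsilon(\delta+q)}$. The hypothesis $0<-q<\delta<-2q$ first records that $q<0$ and, rewriting $\delta+q$, that $0<\delta+q<-q$; since $\epsilon>0$ this gives $R^2=p^2+4\epsilon(\delta+q)>p^2$, hence $R>p>0$, which is the inequality that drives every sign computation below.

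Two of the three required inequalities are immediate. For $a<0$ I would read $a$ from (\ref{7777777777}) as $a=-\,p(R-p)/\epsilon+q$; both summands are negative ($p>0$, $R>p$, $\epsilon>0$ for the first, $q<0$ for the second), so $a<0$. For $b<-a$, equivalently $a+b<0$, I would simply invoke (\ref{rrrrrrrr}): there $a+b=R(p-R)/(2\epsilon)$, and since $R>p>0$ and $\epsilon>0$ the factor $p-R$ is negative, forcing $a+b<0$.

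The substantive step is the remaining inequality $b>a$. First I would simplify $b$: using $2p^2+4\epsilon(\delta+q)=p^2+R^2$ in (\ref{9999999999}) collapses the bracket to a perfect square, giving $b=\delta-3(R-p)^2/(4\epsilon)$. Subtracting $a$ and clearing denominators then yields the closed form $b-a=(R-p)(4p-R)/(2\epsilon)-2q$. \textbf{This is where the main obstacle lies}: the factor $(4p-R)$ need not be positive, since when $R>4p$ the first term is negative, so one cannot conclude $b-a>0$ by inspection. I would overcome this by comparing $(R-p)(R-4p)$ with $(R-p)(R+p)$: because $R-p>0$ and $R-4p<R+p$ (as $p>0$), one has $(R-p)(R-4p)<(R-p)(R+p)=R^2-p^2=4\epsilon(\delta+q)$, and the hypothesis $\delta+q<-q$ upgrades this to $(R-p)(R-4p)<-4\epsilon q$. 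Dividing by $2\epsilon$ gives $(R-p)(R-4p)/(2\epsilon)<-2q$, that is $b-a=-\,(R-p)(R-4p)/(2\epsilon)-2q>0$. I would stress that this chain is valid whether or not $R-4p$ is positive, so no case split on the size of $R$ is needed.

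Having established $a<0$, $b<-a$ and $b>a$, the coefficients satisfy $b\in(a,-a)$ with $a<0$, which is exactly the hypothesis of Theorem (\ref{11}) Case 3, and the asymptotic stability of $x_2^*$ for all $\tau\ge 0$ follows. The only algebra demanding care is the simplification of $b$ to a perfect-square form and the subtraction producing $b-a$; the decisive analytic idea is the single comparison $(R-p)(R-4p)<(R-p)(R+p)$ combined with the upper bound $\delta<-2q$.
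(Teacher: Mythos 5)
Your proposal is correct and follows essentially the same route as the paper: both reduce the claim to Case 3 of Theorem (\ref{11}) by verifying $a+b<0$ (directly from equation (\ref{rrrrrrrr}) and $R=\sqrt{p^2+4\epsilon(\delta+q)}>p$) and $a<b$. The only difference is in the latter step, where the paper expands $a-b=4q+2\delta+\tfrac{5p(p-R)}{2\epsilon}$ and observes that each of the two summands is negative under $\delta<-2q$ and $R>p$, which makes the sign immediate and avoids both your factored form $(R-p)(4p-R)/(2\epsilon)-2q$ and the comparison argument you need to handle the ambiguous sign of $4p-R$; your separate verification of $a<0$ is also redundant, since it already follows from $a<b<-a$.
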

\begin{proof}
Since, $\epsilon>0$ and $-q<\delta$ we have $\delta+q > 0. $\\
Also, $4\epsilon(\delta+q)>0. $\\
$\Rightarrow p^2+4\epsilon(\delta+q)> p^2$\\
$\Rightarrow \sqrt{p^2+4\epsilon(\delta+q)}-p >0 $\\
$\Rightarrow -\sqrt{p^2+4\epsilon(\delta+q)}+p < 0 $\\
$\Rightarrow \sqrt{p^2+4\epsilon(\delta+q)}[-\sqrt{p^2+4\epsilon(\delta+q)}+p] <0. $\\
So, from equation (\ref{rrrrrrrr}) 
and $\epsilon>0$ we have $a+b<0$.\\
Further, we get $a-b=4q+2\delta+\dfrac{5p^2}{2\epsilon}-\dfrac{5p\sqrt{p^2+4\epsilon(\delta+q)}}{2\epsilon}.$\\
Using, $\delta<-2q$ and $\sqrt{p^2+4\epsilon(\delta+q)}>p$, we have $ a-b< 0.$\\
$\Rightarrow a < b.$\\
 Therefore, by using Case (3) of Theorem (\ref{11}) we conclude that the equilibrium point $x_2^*$ is asymptotically stable for all $\tau\geq0$.
\end{proof}
\vspace{0.5cm}
By choosing $\delta=3$, $\epsilon=1$, $p=1$ and $q=-2$ in the equation (\ref{31}) we get stable solution for all $\tau\geq0$ by the Theorem (\ref{4444}). We verified this result by taking $\tau=\left\{0.5k|k=1,2,\ldots,20\right\}$. Figure (\ref{41}) shows stable orbit at $\tau=0.5$.
 \begin{figure}
 \centering
\includegraphics[width=0.6\textwidth]{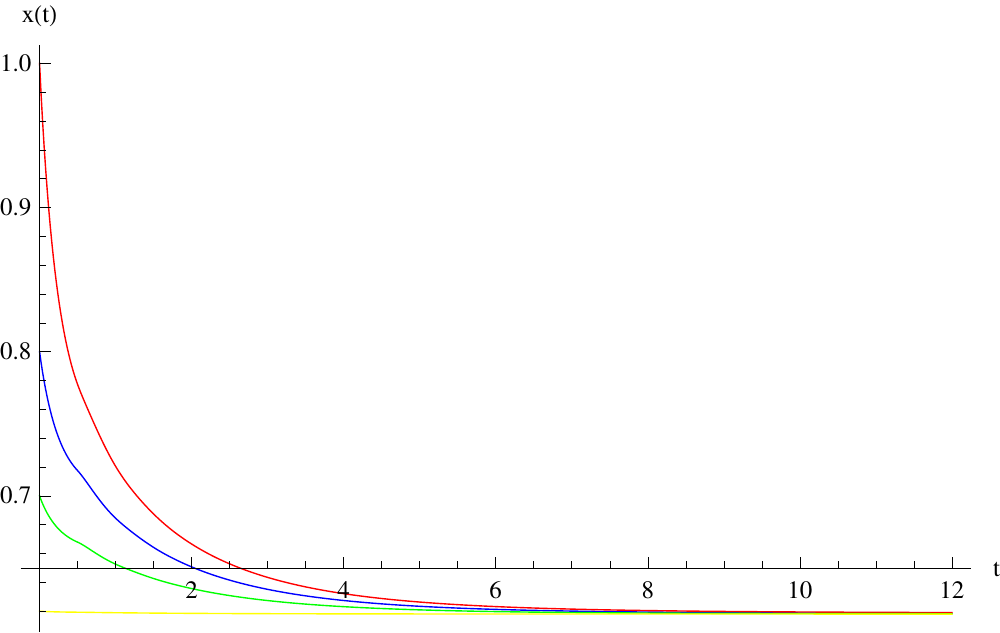}
\caption{ $x_2^*$ is stable for $\tau=0.5$, $\delta=3$, $\epsilon=1$, $p=1$ and $q=-2$}\label{41}
\end{figure}
\begin{Theorem}\label{6666}
If $\epsilon>0$, $p>0$, $\delta<\dfrac{-p^2}{32\epsilon}<0$ and $(q+\delta)>0$ then there exists $\tau_*$ as given in equation (\ref{zzzzz}) (with $a$ and $b$ are defined in equations (\ref{7777777777}) and (\ref{9999999999}) respectively) such that $x_2^*$ is asymptotically stable for $0<\tau<\tau_*$ and unstable for $\tau>\tau_*.$

\end{Theorem}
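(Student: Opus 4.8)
The plan is to reduce the statement to Case 1 of Theorem (\ref{11}), applied to the linearization $D^\alpha \xi = a\xi + b\,\xi(t-\tau)$ at $x_2^*$, with $a$ and $b$ given by (\ref{7777777777}) and (\ref{9999999999}). Case 1 requires exactly $b \in (-\infty,-|a|)$, which is equivalent to the pair of inequalities $a+b<0$ and $a-b>0$ (the first says $b<-a$, the second says $b<a$, and together they give $b<-|a|$ and in particular $b<0$). Thus the entire proof comes down to verifying these two inequalities under the hypotheses $\epsilon>0$, $p>0$, $q+\delta>0$ and $\delta<-p^2/(32\epsilon)$. Once they hold, Theorem (\ref{11}) supplies the critical delay $\tau_*$ in the form (\ref{zzzzz}) and yields at once the dichotomy: $x_2^*$ is asymptotically stable for $0<\tau<\tau_*$, undergoes a Hopf bifurcation at $\tau_*$, and is unstable for $\tau>\tau_*$.

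To organize things I would set $R=\sqrt{p^2+4\epsilon(\delta+q)}$. The hypothesis $q+\delta>0$ together with $\epsilon>0$ makes the radicand strictly exceed $p^2$, so $R$ is real and $R>p>0$; this single fact drives the whole argument. The first inequality is then essentially free: substituting $R$ into (\ref{rrrrrrrr}) gives $a+b=\dfrac{R(p-R)}{2\epsilon}$, and $R>p$ with $\epsilon>0$ forces $a+b<0$.

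The real content is the second inequality $a-b>0$, and this is where the constant $32$ enters. Starting from $a-b=4q+2\delta+\dfrac{5p(p-R)}{2\epsilon}$ and eliminating $q$ through $4\epsilon(\delta+q)=R^2-p^2$, I would rewrite this as $a-b=\dfrac{(2R-3p)(R-p)}{2\epsilon}-2\delta$, so that $a-b>0$ becomes equivalent to $(2R-3p)(R-p)>4\epsilon\delta$. Viewed as a function of $R$ on $(p,\infty)$, the left-hand side is a quadratic with positive leading coefficient whose global minimum is $-p^2/8$, attained at $R=5p/4$. The hypothesis $\delta<-p^2/(32\epsilon)$ is precisely the statement $4\epsilon\delta<-p^2/8$, so $(2R-3p)(R-p)\ge -p^2/8>4\epsilon\delta$ holds for every admissible $R$, and hence uniformly in $q$ with $q+\delta>0$. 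This gives $a-b>0$.

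I expect this minimization to be the only genuine obstacle: one must spot the factorization $2R^2-5pR+3p^2=(2R-3p)(R-p)$ and notice that the binding constraint occurs at the interior point $R=5p/4$ rather than at the endpoint $R=p$ (where the expression vanishes), which is exactly what pins the threshold at $p^2/(32\epsilon)$. With $a+b<0$ and $a-b>0$ established we have $b<-|a|$, so Case 1 of Theorem (\ref{11}) applies verbatim with $a,b$ as in (\ref{7777777777}) and (\ref{9999999999}), producing the stated $\tau_*$ and the asserted stability/instability behaviour, which completes the proof.
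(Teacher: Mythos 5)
Your proof is correct, but it takes a genuinely different route from the paper's, and the difference is substantive. The paper also reduces the statement to Case 1 of Theorem (\ref{11}) and proves $a+b<0$ exactly as you do (its Step 1 is your observation that $R=\sqrt{p^2+4\epsilon(\delta+q)}>p$). However, instead of establishing $a-b>0$ directly, the paper tries to prove $a>0$ (Step 2) and $b<0$ (Step 3) separately and then infer $b<a$ from that sign pattern. Your key computation --- writing $a-b=\frac{(2R-3p)(R-p)}{2\epsilon}-2\delta$ and minimizing the quadratic $2R^2-5pR+3p^2$ over $R>p$ at the interior point $R=5p/4$, where it equals $-p^2/8$ --- does not appear in the paper at all.

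Your route is also the more defensible one. The paper's Step 2 reaches $a>-\delta+\frac{p^2}{\epsilon}-\frac{p}{\epsilon}\sqrt{\frac{7p^2}{8}+4\epsilon q}$ and then invokes $32\epsilon q>p^2$ to replace the square root by $p$; but that replacement would require $\frac{7p^2}{8}+4\epsilon q\leq p^2$, i.e.\ $32\epsilon q\leq p^2$, which is the opposite inequality. In fact $a>0$ can fail under the theorem's hypotheses: for $p=\epsilon=1$, $\delta=-1/8$, $q=7/8$ one gets $R=2$, hence $a=-1/8<0$ (while $b=-7/8$, so $b<-|a|$ still holds and the conclusion survives). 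Your pair of inequalities $a+b<0$ and $a-b>0$ yields $b<-|a|$ with no claim about the sign of $a$, so it covers such cases, and your minimization argument additionally shows why the threshold $-p^2/(32\epsilon)$ is sharp. In short: same overall strategy of invoking Case 1 of Theorem (\ref{11}), but a different and more robust verification of the hypothesis $b\in(-\infty,-|a|)$.
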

\begin{proof}  \textbf{Step 1:} 
Since $q+\delta>0$, $\epsilon>0$ and $p>0$ we have, $p^2+4\epsilon(\delta+q)>p^2.$\\
$\Rightarrow p-\sqrt{p^2+4\epsilon(\delta+q)}<0.$\\ 
Therefore by equation (\ref{rrrrrrrr}), $(a+b)<0.$\\
\textbf{Step 2:} Since, $\delta<\dfrac{-p^2}{32\epsilon},\textit{ we have } q+\delta<q-\dfrac{p^2}{32\epsilon}$\\
 $\Rightarrow 4\epsilon(q+\delta)<4\epsilon(q-\dfrac{p^2}{32\epsilon})$\\
 $\Rightarrow 0< p^2+4\epsilon(\delta+q)<p^2+4\epsilon q -\dfrac{p^2}{8}\\
 \Rightarrow \sqrt{p^2+4\epsilon(\delta+q)}<\sqrt{\dfrac{7p^2}{8}+4\epsilon q}\\
 \Rightarrow \dfrac{-p}{\epsilon}\sqrt{p^2+4\epsilon(\delta+q)}>-\dfrac{p}{\epsilon}\sqrt{\dfrac{7p^2}{8}+4\epsilon q}\\
 \Rightarrow a > q+\dfrac{p^2}{\epsilon}-\dfrac{p}{\epsilon}\sqrt{\dfrac{7p^2}{8}+4\epsilon q}\\
 \Rightarrow a > -\delta+\dfrac{p^2}{\epsilon}-\dfrac{p}{\epsilon}\sqrt{\dfrac{7p^2}{8}+4\epsilon q}$.\\
  Since, $32\epsilon q >-32\epsilon\delta>p^2$\\
  we have                                                                                       $ a>-\delta+\dfrac{p^2}{\epsilon}-\dfrac{p^2}{\epsilon}$\\
  $\Rightarrow a>-\delta$\\
  $\Rightarrow a>0$.\label{ll}\\
  \textbf{Step 3:} Further, $b$ can also be written as $b=\delta-\dfrac{3\epsilon(a-q)^2}{4p^2}.$\\
  $\Rightarrow b<\dfrac{-p^2}{32\epsilon}-\dfrac{3\epsilon(a-q)^2}{4p^2}$
  which shows that $b<0.$\label{mm}\\
  So, $a+b<0$ and $b<a.$\\
  $\Rightarrow b\in(-\infty,-|a|).$\\
  Therefore, by using Case (1) of Theorem (\ref{11}), we get the required critical value $\tau_*$ of delay.
  
   \end{proof}\vspace{0.5cm}
   By setting $\delta=-1/2$, $p=4$, $\epsilon=2$ and $q=1$ we get $\tau_*\approx 2.6521$. The convergent solution for $\tau=1.8$ is given in Figure (\ref{64}), whereas divergent solution for $\tau=2.9$ is given in Figure (\ref{10}).\\
   \begin{Theorem}\label{thm1.8}
   If $\epsilon>0$, $p>0$ and $(q+\delta)<0$ then $x_2^*$ is unstable for all $\tau\geq0.$
\end{Theorem}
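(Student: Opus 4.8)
The plan is to reduce the instability claim directly to Case (2) of Theorem (\ref{11}), in the same spirit as the proofs of Theorems (\ref{4444}) and (\ref{6666}) reduced to Cases (3) and (1) respectively; the difference is that here the sign of $a+b$ will turn out to be positive, which is exactly what Case (2) demands. The key tool is the closed form (\ref{rrrrrrrr}) for $a+b$ evaluated at $x_2^*$, so no fresh computation of $a$ and $b$ is needed.

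First I would pin down the sign of the radical. Since $\epsilon>0$ and $q+\delta<0$, we have $4\epsilon(\delta+q)<0$, so $p^2+4\epsilon(\delta+q)<p^2$. Because $x_2^*$ exists only when $p^2+4\epsilon(\delta+q)\geq 0$, and since $p>0$, this forces $0\leq\sqrt{p^2+4\epsilon(\delta+q)}<p$, and hence $p-\sqrt{p^2+4\epsilon(\delta+q)}>0$.

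Next I would read the sign of $a+b$ straight off equation (\ref{rrrrrrrr}). The bracketed factor $[p-\sqrt{p^2+4\epsilon(\delta+q)}]$ is positive by the previous step, the outer factor $\sqrt{p^2+4\epsilon(\delta+q)}$ is non-negative, and $2\epsilon>0$; therefore $a+b\geq 0$, with strict inequality $a+b>0$ whenever the two equilibria $x_2^*$ and $x_3^*$ are distinct. Since $a+b>0$ is the same as $b>-a$, that is $b\in(-a,\infty)$, the hypothesis of Case (2) of Theorem (\ref{11}) is met, and that case declares $x_2^*$ unstable for every $\tau\geq 0$, which is the assertion.

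The only delicate point is the degenerate boundary $p^2+4\epsilon(\delta+q)=0$, where (\ref{rrrrrrrr}) yields $a+b=0$ and the strict inequality $b>-a$ of Case (2) just barely fails. I do not anticipate this being a genuine obstacle: at that parameter value the discriminant vanishes, so $x_2^*$ and $x_3^*$ coalesce into a single non-hyperbolic equilibrium lying outside the two-root regime under study, and restricting to a strictly positive discriminant keeps the argument cleanly inside Case (2).
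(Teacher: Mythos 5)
Your proposal is correct and follows essentially the same route as the paper: use the hypotheses to show $0<p^2+4\epsilon(\delta+q)<p^2$, hence $\sqrt{p^2+4\epsilon(\delta+q)}<p$, read off $a+b>0$ from equation (\ref{rrrrrrrr}), and invoke Case (2) of Theorem (\ref{11}). Your extra remark about the degenerate boundary $p^2+4\epsilon(\delta+q)=0$ is a reasonable clarification that the paper handles implicitly by writing the strict inequality $0<p^2+4\epsilon(\delta+q)$.
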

\begin{proof}
Since $\epsilon>0$ and $(q+\delta)<0,$\\
 we have $0<p^2+4\epsilon(\delta+q)<p^2.$\\
Therefore, $\sqrt{p^2+4\epsilon(\delta+q)}<p.$\\
So, from (\ref{rrrrrrrr}) we get $a+b>0.$\\
Hence, by Case (2) of Theorem (\ref{11}), we get the required result.
\end{proof}\vspace{0.5cm}
We verify the Theorem (\ref{thm1.8}) by putting $\delta=-2$, $\epsilon=1$, $p=3$, $q=1$ and $\tau=0.5$ in equation (\ref{31}) for which we get the unstable curve which is shown in Figure (\ref{fig1.2222222}).
 \begin{figure}
 \centering
\includegraphics[width=0.6\textwidth]{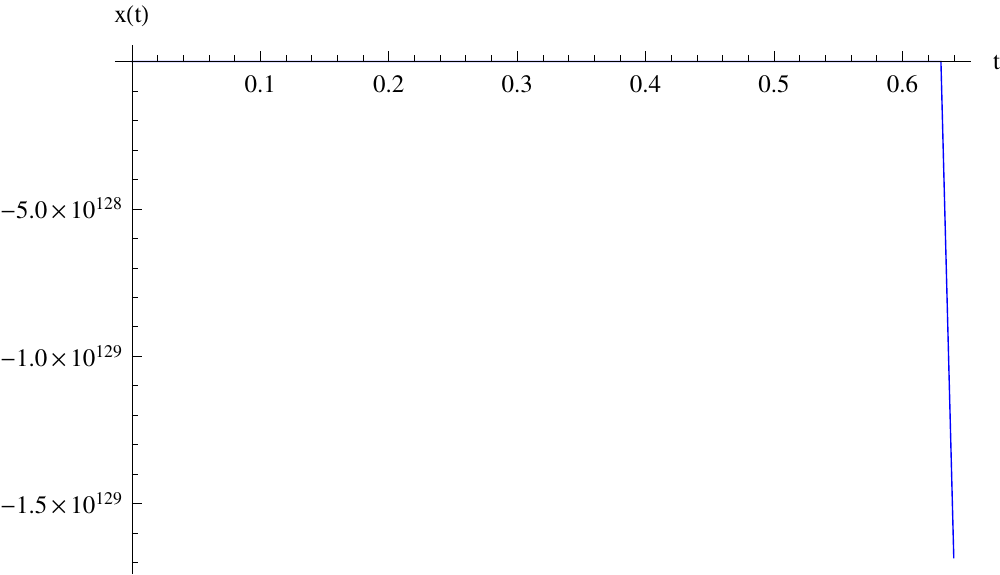}
\caption{ $x_2^*$ is unstable for $\tau=0.5$, $\delta=-2$, $\epsilon=1$, $p=3$ and $q=1$}\label{fig1.2222222}
\end{figure}

    \begin{figure}
 \centering
\includegraphics[width=0.6\textwidth]{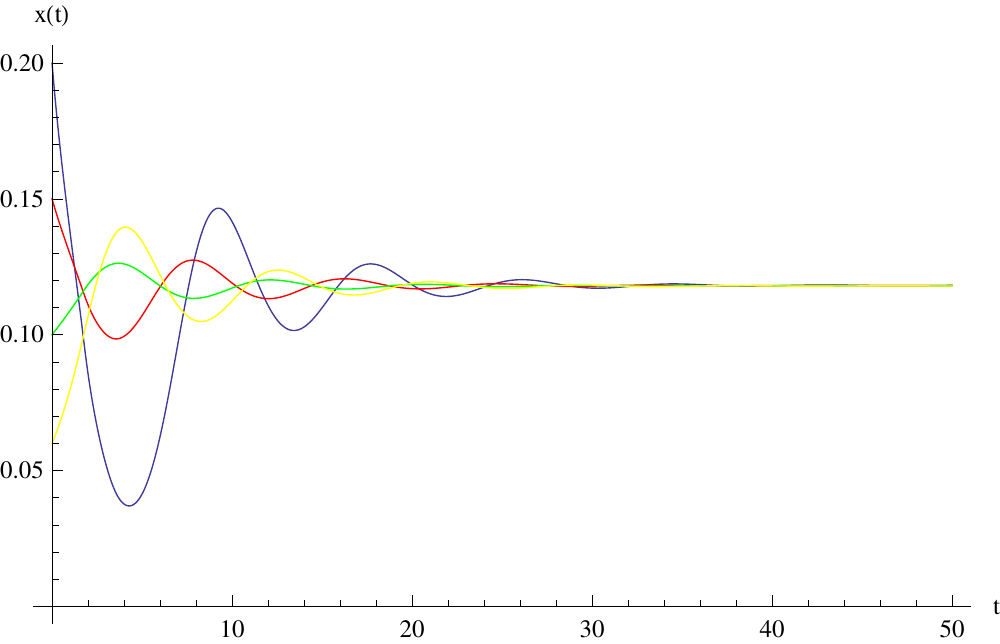}
\caption{$x_2^*$ is asymptotically stable for $\delta=-1/2$, $\epsilon=2$, $p=4$, $q=1$ and $\tau=1.8$  }\label{64}
\end{figure}
  \begin{figure}
 \centering
\includegraphics[width=0.6\textwidth]{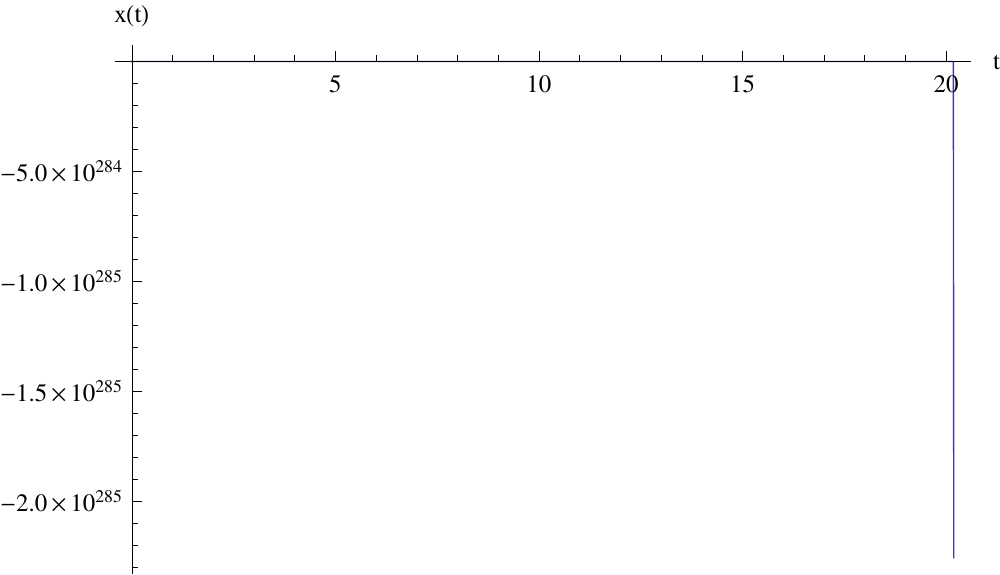}
\caption{$x_2^*$ is unstable for $\delta=-1/2$, $\epsilon=2$, $p=4$, $q=1$ and $\tau=2.9$  }\label{10}
\end{figure}
We propose few more stability results in the Theorems \ref{nn}, \ref{11111} and \ref{zz}.
  \begin{Theorem}\label{nn}
 If $\epsilon<0$, $p>0$ and $q+\delta<0$ then $x_2^*$ is unstable for all $\tau\geq0.$
  \end{Theorem}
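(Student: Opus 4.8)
The plan is to invoke Case 2 of Theorem \ref{11}, which declares the equilibrium unstable for every $\tau\geq0$ precisely when $b\in(-a,\infty)$, i.e. when $a+b>0$. Since equation (\ref{rrrrrrrr}) already expresses $a+b$ in closed form at $x_2^*$, the entire argument collapses to a sign analysis of that single quantity---exactly parallel to the proof of Theorem \ref{thm1.8}, but with the sign of $\epsilon$ reversed.

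First I would confirm that the equilibrium $x_2^*$ actually exists, i.e. that the radicand $p^2+4\epsilon(\delta+q)$ is nonnegative. With $\epsilon<0$ and $q+\delta<0$, the product $4\epsilon(\delta+q)$ is positive, so $p^2+4\epsilon(\delta+q)>p^2>0$; the square root is real and in fact exceeds $p$. Writing $S=\sqrt{p^2+4\epsilon(\delta+q)}$, this gives $S>p>0$, hence $p-S<0$ while $S>0$, so the numerator $S(p-S)$ appearing in (\ref{rrrrrrrr}) is negative.

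Next I would read off the denominator: since $\epsilon<0$ we have $2\epsilon<0$. A negative numerator over a negative denominator yields $a+b>0$. This is the crux, and it is also the only point at which the present hypotheses differ structurally from those of Theorem \ref{thm1.8}: there both the factor $p-S$ and the denominator $2\epsilon$ were positive, whereas here both are negative, and in each case the two sign reversals cancel to leave $a+b>0$.

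Finally, $a+b>0$ means $b>-a$, i.e. $b\in(-a,\infty)$, so Case 2 of Theorem \ref{11} applies and $x_2^*$ is unstable for all $\tau\geq0$, completing the argument. I do not expect any genuine obstacle here: the proof is a short sign-tracking computation built entirely on formula (\ref{rrrrrrrr}), and the only step meriting a moment's care is verifying that the discriminant condition guaranteeing the existence of $x_2^*$ holds automatically under the stated hypotheses $\epsilon<0$ and $q+\delta<0$.
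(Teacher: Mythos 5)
Your proposal is correct and follows essentially the same route as the paper's proof: both determine the sign of $a+b$ from equation (\ref{rrrrrrrr}) by noting that $4\epsilon(\delta+q)>0$ forces the numerator $\sqrt{p^2+4\epsilon(\delta+q)}\,[p-\sqrt{p^2+4\epsilon(\delta+q)}]$ to be negative while the denominator $2\epsilon$ is also negative, giving $a+b>0$, and then invoke Case 2 of Theorem \ref{11}. Your added remark that these hypotheses automatically guarantee the existence of $x_2^*$ is a small but worthwhile point the paper leaves implicit.
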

  \begin{proof} For, $\epsilon<0$ and $q+\delta<0$ we have $4\epsilon(q+\delta)>0.$\\
  So, $p^2+4\epsilon(\delta+q)>p^2>0.$\\
  $\Rightarrow p-\sqrt{p^2+4\epsilon(\delta+q)}<0$\\
  $\Rightarrow \sqrt{p^2+4\epsilon(\delta+q)}[p-\sqrt{p^2+4\epsilon(\delta+q)}]<0$\\
  Therefore, by equation (\ref{rrrrrrrr}), we conclude that $a+b>0$ because   $\epsilon<0$.
   Hence $b\in(-a,\infty).$\\
  Therefore, we get the required result by employing Case (2) of Theorem (\ref{11}).
  \end{proof}\vspace{0.5cm}
 We verified Theorem (\ref{nn}) by setting $\delta=1$, $\epsilon=-1$, $p=1$, $q=-2$ and various values of $\tau\in(0,100)$. Figure (\ref{pp}) shows unbounded solution in this case, with $\tau=0.3.$
 \begin{figure}
 \centering
\includegraphics[width=0.6\textwidth]{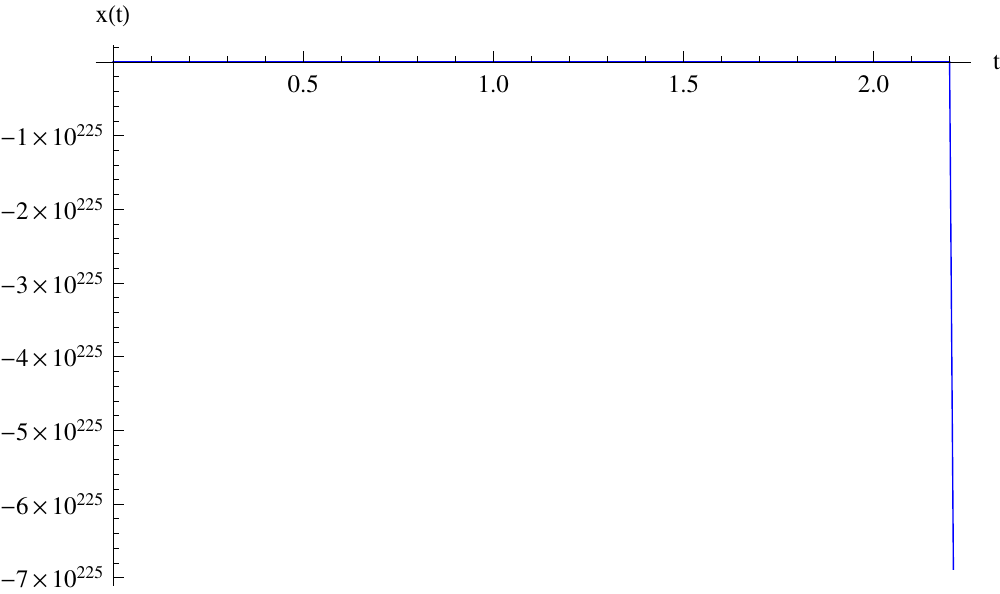}
\caption{For $\delta=1$, $\epsilon=-1$, $p=1$, $q=-2$ and $\tau=0.3$ we get unstable solution for $x_2^*$  }\label{pp}
\end{figure}
\begin{Theorem}\label{11111}
If $\epsilon>0$, $p<0$, $\delta\leq\dfrac{3p^2}{4\epsilon}$ and $q>\dfrac{-p^2-4\delta\epsilon}{4\epsilon}$ then $\exists$ $\tau_*$ as given in (\ref{zzzzz}), where $a$ and $b$ are defined by (\ref{7777777777}) and (\ref{9999999999}) respectively, such that $x_2^*$ is asymptotically stable for $0<\tau<\tau_*$ and unstable for $\tau>\tau_*.$
\end{Theorem}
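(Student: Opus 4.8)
The plan is to verify the hypotheses of Case 1 of Theorem \ref{11} for the coefficients $a$ and $b$ given by (\ref{7777777777}) and (\ref{9999999999}); once $b\in(-\infty,-|a|)$ is established, the existence of the critical delay $\tau_*$ of (\ref{zzzzz}) and the delay-dependent stability follow immediately. Writing $S=\sqrt{p^2+4\epsilon(\delta+q)}$ for brevity, I observe that $b<-|a|$ is equivalent to the pair of inequalities $a+b<0$ and $b<a$, and that both of these are in turn consequences of the single pair $a+b<0$ together with $a>0$ (since $a>0$ forces $-|a|=-a$, and then $a+b<0$ gives $b<-a<0<a$). So the whole argument reduces to proving the two sign facts $a+b<0$ and $a>0$, mirroring the structure used in Theorem \ref{6666}.

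First I would record that the hypotheses make $S$ real and strictly positive: the condition $q>\frac{-p^2-4\delta\epsilon}{4\epsilon}$ rearranges to $p^2+4\epsilon(\delta+q)>0$, so $S>0$ and $x_2^*$ genuinely exists. The sign of $a+b$ is then read directly off (\ref{rrrrrrrr}): the factor $S>0$, the factor $p-S<0$ because $p<0<S$, and $2\epsilon>0$, whence $a+b<0$. This step is essentially free and uses only $p<0$ and the reality of $S$.

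The hard part will be establishing $a>0$, and this is where both parameter hypotheses must be combined. The idea is to first turn them into a clean lower bound on $q$: rewriting the $q$-hypothesis as $q>\frac{-p^2}{4\epsilon}-\delta$ and inserting $\delta\le\frac{3p^2}{4\epsilon}$ (so that $-\delta\ge\frac{-3p^2}{4\epsilon}$) yields $q>\frac{-p^2}{4\epsilon}-\frac{3p^2}{4\epsilon}=\frac{-p^2}{\epsilon}$, i.e. $p^2+\epsilon q>0$. I would then write $\epsilon a=(p^2+\epsilon q)-pS$ from (\ref{7777777777}); the bracket is positive by the bound just obtained, while $-pS>0$ since $p<0$ and $S>0$, so $\epsilon a>0$ and therefore $a>0$.

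With $a+b<0$ and $a>0$ in hand, we get $b<-a=-|a|$, so $b\in(-\infty,-|a|)$ and Case 1 of Theorem \ref{11} supplies exactly the asserted $\tau_*$, with asymptotic stability for $0<\tau<\tau_*$ and instability for $\tau>\tau_*$. I expect the only genuine subtlety to be the chaining of the two inequalities in the previous paragraph, in particular the cancellation that collapses the two hypotheses into $q>-p^2/\epsilon$; everything else is sign-bookkeeping on the explicit formulas for $a$ and $a+b$.
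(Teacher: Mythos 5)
Your proposal is correct and follows essentially the same route as the paper: establish $a+b<0$ directly from (\ref{rrrrrrrr}) using $p<0<\sqrt{p^2+4\epsilon(\delta+q)}$ and $\epsilon>0$, then combine the two parameter hypotheses to force $a>0$, and invoke Case 1 of Theorem \ref{11}. The only cosmetic difference is that you collapse the hypotheses into the single bound $p^2+\epsilon q>0$ before adding the positive term $-p\sqrt{p^2+4\epsilon(\delta+q)}$, whereas the paper keeps the intermediate bound $a>\frac{3p^2}{4\epsilon}-\delta-\frac{p}{\epsilon}\sqrt{p^2+4\epsilon(\delta+q)}$; the underlying inequality chain is the same.
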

\begin{proof}
	We can conclude that $a+b<0$ by using the equation (\ref{rrrrrrrr}) and the conditions $\epsilon>0$ and $p<0$.
	Therefore, to prove this Theorem we need either $b<a$ or $a>0.$\\
However, 
$ a>\dfrac{-p^2-4\delta\epsilon}{4\epsilon}+\dfrac{p^2}{\epsilon}-\dfrac{p\sqrt{p^2+4\epsilon(\delta+q)}}{\epsilon}$ because $q>\dfrac{-p^2-4\delta\epsilon}{4\epsilon}$.\\
Also, we have $\delta\leq \dfrac{3p^2}{4\epsilon}$.\\
$\Rightarrow-\delta\geq \dfrac{-3p^2}{4\epsilon}.$\\
Using these inequalities we have,
$a>\dfrac{-p\sqrt{p^2+4\epsilon(\delta+q)}}{\epsilon}.$\\
Further, $p<0$ and $\epsilon>0$ so, we have $a>0.$\\
Hence, $b\in(-\infty,-|a|)$. Therefore, the proof follows by using the Case (1) of Theorem (\ref{11}).
\end{proof}\vspace{0.5cm}
Illustration of this theorem is given in Figure (\ref{42}) by setting the parameters as $\delta=\dfrac{3}{8}$, $\epsilon=2$, $p=-1$ and  $q=1$ in equation (\ref{31}). For this set of parameters, we get $x_2^*=1.11603$ and $\tau_*=0.157185.$ Hence $\tau=0.08$ gives stable solution (cf. Figure (\ref{42})) and     
$\tau=0.36$ gives unstable solution(cf. Figure (\ref{43})).
\begin{figure}
 \centering
\includegraphics[width=0.6\textwidth]{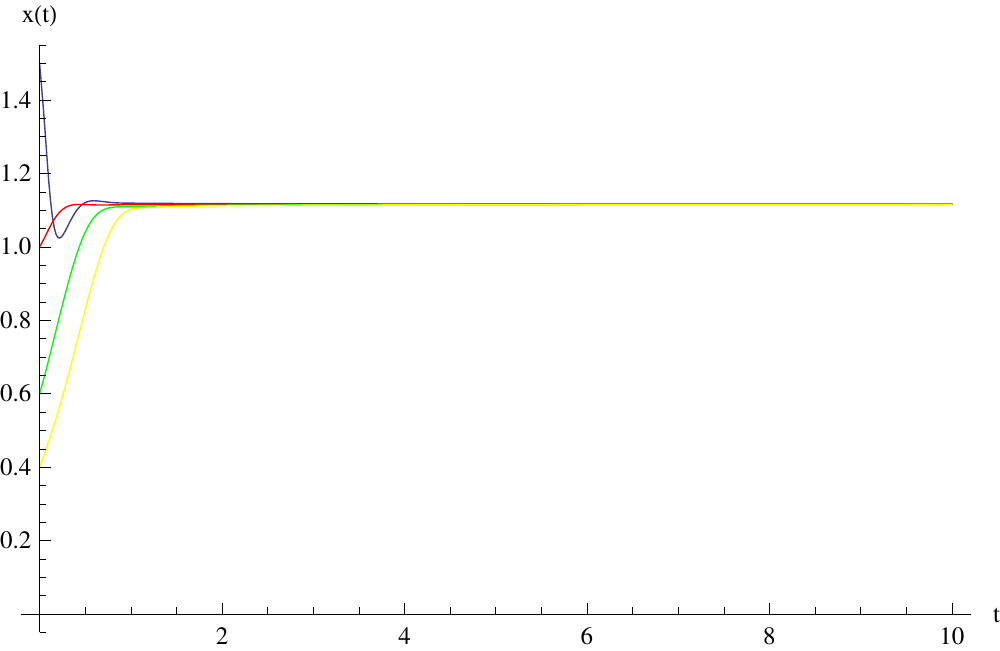}
\caption{ $x_2^*$ is stable for $\tau=0.08, \delta=3/8, \epsilon=2$,$p=-1$ and  $q=1$}\label{42}
\end{figure}
 \begin{figure}
 \centering
\includegraphics[width=0.6\textwidth]{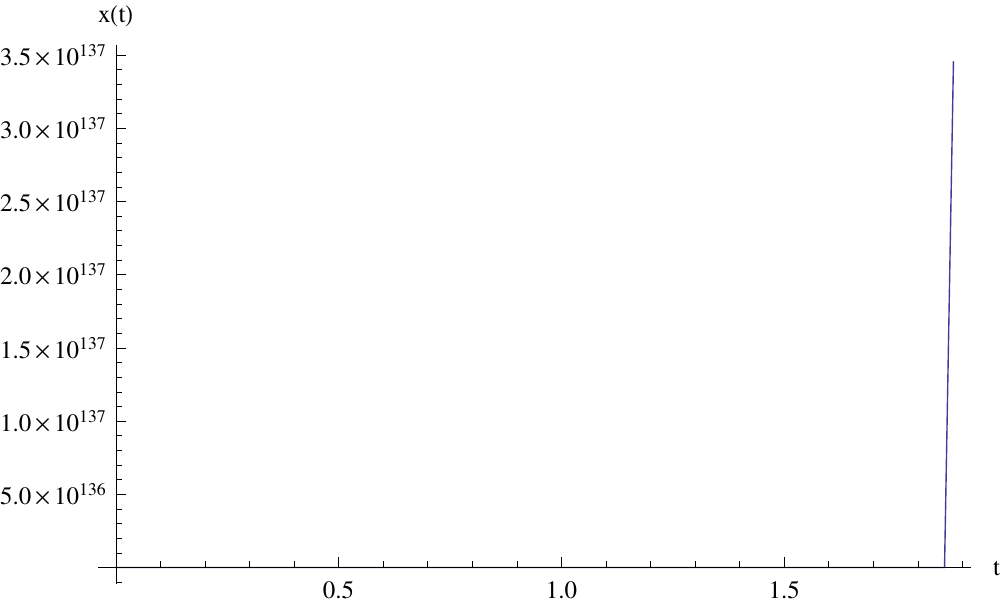}
\caption{ $x_2^*$ is unstable for $\tau=0.36, \delta=3/8, \epsilon=2$, $p=-1$ and  $q=1$}\label{43}
\end{figure}

\begin{Theorem}\label{zz}
If $\epsilon<0$, $p<0$ and $\delta+q<0$ then $x_2^*$ is unstable for all $\tau\geq 0$.
\end{Theorem}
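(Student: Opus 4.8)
The plan is to reduce everything to the sign of $a+b$: once I establish $a+b>0$, Case~(2) of Theorem~\ref{11} immediately declares $x_2^*$ unstable for every $\tau\ge 0$. The whole argument is therefore a sign analysis of the closed-form expression~(\ref{rrrrrrrr}), and it runs almost verbatim in parallel with the proof of Theorem~\ref{nn}; the only change is that here $p<0$ rather than $p>0$, which as it turns out does not affect the outcome.

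First I would set $S=\sqrt{p^2+4\epsilon(\delta+q)}$ and check that the radicand is positive, so that $S$ is a genuine positive real number and the equilibrium $x_2^*$ actually exists. Since $\epsilon<0$ and $\delta+q<0$, their product is positive, so $4\epsilon(\delta+q)>0$, and hence $p^2+4\epsilon(\delta+q)>p^2>0$. This gives $S>0$.

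Next I would read off the sign of each factor in~(\ref{rrrrrrrr}). Because $p<0$ while $S>0$, the bracket satisfies $p-S<0$, and multiplying by the positive $S$ makes the numerator $S(p-S)<0$. The denominator $2\epsilon$ is negative by hypothesis, so the quotient $a+b=S(p-S)/(2\epsilon)$ is positive. Equivalently $b\in(-a,\infty)$, which is exactly the hypothesis of Case~(2) of Theorem~\ref{11}; that case then yields instability of $x_2^*$ for all $\tau\ge 0$, completing the proof.

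I do not anticipate any real obstacle: the conclusion is forced once the signs of $\epsilon$, $p$, and $\delta+q$ are tracked through the formula for $a+b$. The only points that merit a moment's care are confirming that the radicand stays positive (so that both $S$ and $x_2^*$ are real) and noting that $p-S<0$ holds precisely because $S$ is positive regardless of the sign of $p$ --- which is what makes the case $p<0$ behave identically to the $p>0$ case of Theorem~\ref{nn}. With those two observations in hand, the sign computation is automatic.
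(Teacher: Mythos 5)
Your proposal is correct and follows essentially the same route as the paper: both arguments reduce to showing $a+b>0$ from equation (\ref{rrrrrrrr}) under the given sign hypotheses and then invoke Case~(2) of Theorem~\ref{11}. The only cosmetic difference is that you keep the numerator factored as $S(p-S)$ while the paper expands it, and you add the (welcome) explicit check that the radicand is positive so that $x_2^*$ exists.
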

\begin{proof} We can write $a+b$  as $\dfrac{-[4\epsilon(\delta+q)+p^2]+p\sqrt{p^2+4\epsilon(\delta+q)}}{2\epsilon}.$
\\Since $p<0$ and $\epsilon$ is also negative quantity so, $a+b>0$. Therefore by Case (2) Theorem (\ref{11}) we have the equilibrium point $x_2^*$ is unstable for $\tau\geq 0$.
\end{proof}\vspace{0.5cm}
Verification of this Theorem (\ref{zz}) is given in Figure (\ref{51})  by setting $\delta=-2$, $p=-1$, $q=1$, $\epsilon=-1$ and $\tau=0.2$.
\begin{figure}
 \centering
\includegraphics[width=0.6\textwidth]{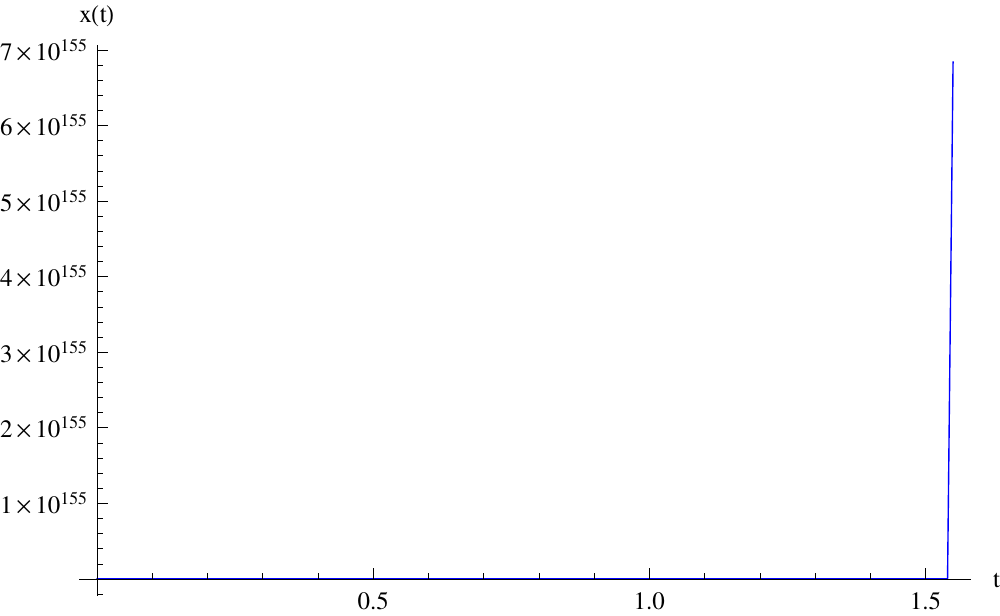}
\caption{ Theorem (\ref{zz}) is verified for the parameter values $\delta=-2$, $\epsilon=-1$, p=-1 and q=1 and $\tau=0.2$}\label{51}
\end{figure}
\section{Stable region for $x_2^*$ }\label{sec1.5}
In this section, we sketch the stable region for $x_2^*$ with $\epsilon>0$ and $p>0.$  \\
From Theorem (\ref{11}), it is clear that the curves $a-b=0$ (with $a\leq0$) and $a+b=0$ are bifurcation curves.\\
In this case, $a+b=0$ gives $\delta=-q.$\\
Further, $a-b=0$ with $a\leq 0$ gives, $\delta=g_1(p,q,\epsilon)$ and  $\delta=g_2(p,q,\epsilon),$ where $ g_1(p,q,\epsilon)=\dfrac{15p^2-16q\epsilon+5\sqrt{9p^4-16p^2q\epsilon}}{8\epsilon},$  $-\infty<q\leq q_2$,\\    
$g_2(p,q,\epsilon)=\dfrac{15p^2-16q\epsilon-5\sqrt{9p^4-16p^2q\epsilon}}{8\epsilon},$ $q_3\leq q\leq q_2$,\\ $q_2=\dfrac{9p^2}{16\epsilon}$ and $q_3=\dfrac{-p^2}{\epsilon}.$\\
The branch $\delta=g_2(p,q,\epsilon)$ will be valid for $q\in \Big[q_3,q_2\Big].$ If $q<q_3$ then either $a$ and $b$ become complex numbers or $a\neq b$, along the curve $\delta=g_2(p,q,\epsilon)$. If $q>q_2$ then $g_2(p,q,\epsilon)\notin \mathbb{R}.$\\
We have following observations:
\begin{itemize}
\item[(1)] $g_1(p,q,\epsilon)$ is decreasing in the interval $-\infty<q\leq q_2$ because
\[\dfrac{\partial g_1}{\partial q}=-2-5\dfrac{p}{\sqrt{9p^2-16q\epsilon}}<0\],\\
 for $p>0$ and $\epsilon>0$. 
\item[(2)] $\lim_{q\to q_2}g_1(p,q,\epsilon)=\lim_{q\to q_2}g_2(p,q,\epsilon)=\dfrac{3p^2}{4\epsilon}.$ Furthermore,  $\lim_{q\to -\infty}g_1(p,q,\epsilon)=\infty.$
\item[(3)] Nature of $g_2(p,q,\epsilon):$
\textbullet $g_2(p,q,\epsilon)$ is monotonically decreasing in $q_3\leq q \leq q_0$ where $q_0=\dfrac{11p^2}{64\epsilon}:$\\
We have $\dfrac{\partial g_1}{\partial q}=-2-5\dfrac{p}{\sqrt{9p^2-16q\epsilon}}$ \label{eq1}
and $q_3<q<q_0.$\\
$\Rightarrow \dfrac{25p^2}{4}<9p^4-16q\epsilon<25p^2$\\
$\Rightarrow\dfrac{1}{5p}<\dfrac{1}{\sqrt{9p^2-16q\epsilon}}<\dfrac{2}{5p}$\\
$\Rightarrow 1<\dfrac{5p}{\sqrt{9p^2-16q\epsilon}}<2$\\
$\Rightarrow \dfrac{\partial g_1}{\partial q}<0$ $\forall q\in \Big[q_3,q_0\Big]. $\\
\textbullet $g_2(p,q,\epsilon)$ is monotonic increasing in $q_0\leq q \leq q_2:$\\
If $q\in \Big[q_0,q_2\Big]$, then
$ \dfrac{11p^2}{4}<16q\epsilon<9p^2.$\\
$\Rightarrow 0<9p^2-16q\epsilon<\dfrac{25p^2}{\epsilon}$\\
$\Rightarrow 2<\dfrac{5p}{\sqrt{9p^2-16q\epsilon}}.$\\
So, $\dfrac{\partial g_2}{\partial q}>0$ $\forall q\in \Big[q_0,q_2\Big]$.\\
\textbullet Local minima of $g_2(p,q,\epsilon)$ is at $q=q_0$ with minimum value $\delta_1=\dfrac{-p^2}{32\epsilon}.$\\
Since $|q_0|>|\delta_1|$, $g_2(p,q,\epsilon)$ lies above the curve $\delta=-q$ for $0\leq q \leq q_2.$
\item[(4)] Intersection of $\delta=g_2(p,q,\epsilon)$ and $\delta=-q$ is $(0,0)$. Further, $\delta=g_2(p,q,\epsilon)$ intersects $q$ axis at $(0,0)$ and at $(q_1,0)$ where $q_1=\dfrac{5p^2}{16\epsilon}$.\\
$\lim_{q\to q_3}g_2(p,q,\epsilon)=\dfrac{3p^2}{4\epsilon}<\dfrac{p^2}{\epsilon}.$\\
So, in the interval $q_3\leq q \leq 0,$ the curve $\delta=g_2(p,q,\epsilon)$ lies below the curve $\delta=-q.$
\item[(5)] The curve $\delta=g_1(p,q,\epsilon)$ is always above the curve $\delta=-q:$\\
Suppose $q<0$.\\
Since, $g_1(p,q,\epsilon)=-2q+\dfrac{15p^2+5p\sqrt{9p^2-16q\epsilon}}{8\epsilon}$ and $p$ and $\epsilon$ both are positive, $g_1(p,q,\epsilon)>-2q>-q.$\\
Further for any $q$,\\
 $g_1(p,q,\epsilon)$ = $\dfrac{6p^2+\sqrt{9p^2-16q \epsilon}(\sqrt{9p^2-16q\epsilon}+5p)}{8\epsilon}>0$ because $p$ and $\epsilon$ both are positive.\\
Therefore, $g_1(p,q,\epsilon)>-q$, $q\in\Big(-\infty,q_2\Big).$
\item[(6)] Intersection of $\delta=g_1(p,q,\epsilon)$ with $\delta$ axis is $\delta_0=g_1(p,0,\epsilon)=\dfrac{30p^2}{8\epsilon}>0.$\\
Using these observations, we sketch  the stability regions for $x_2^*$ in Figure (\ref{fig1.5}).\\ We have-
\end{itemize}
\begin{figure}
 \centering
\includegraphics[width=0.75\textwidth]{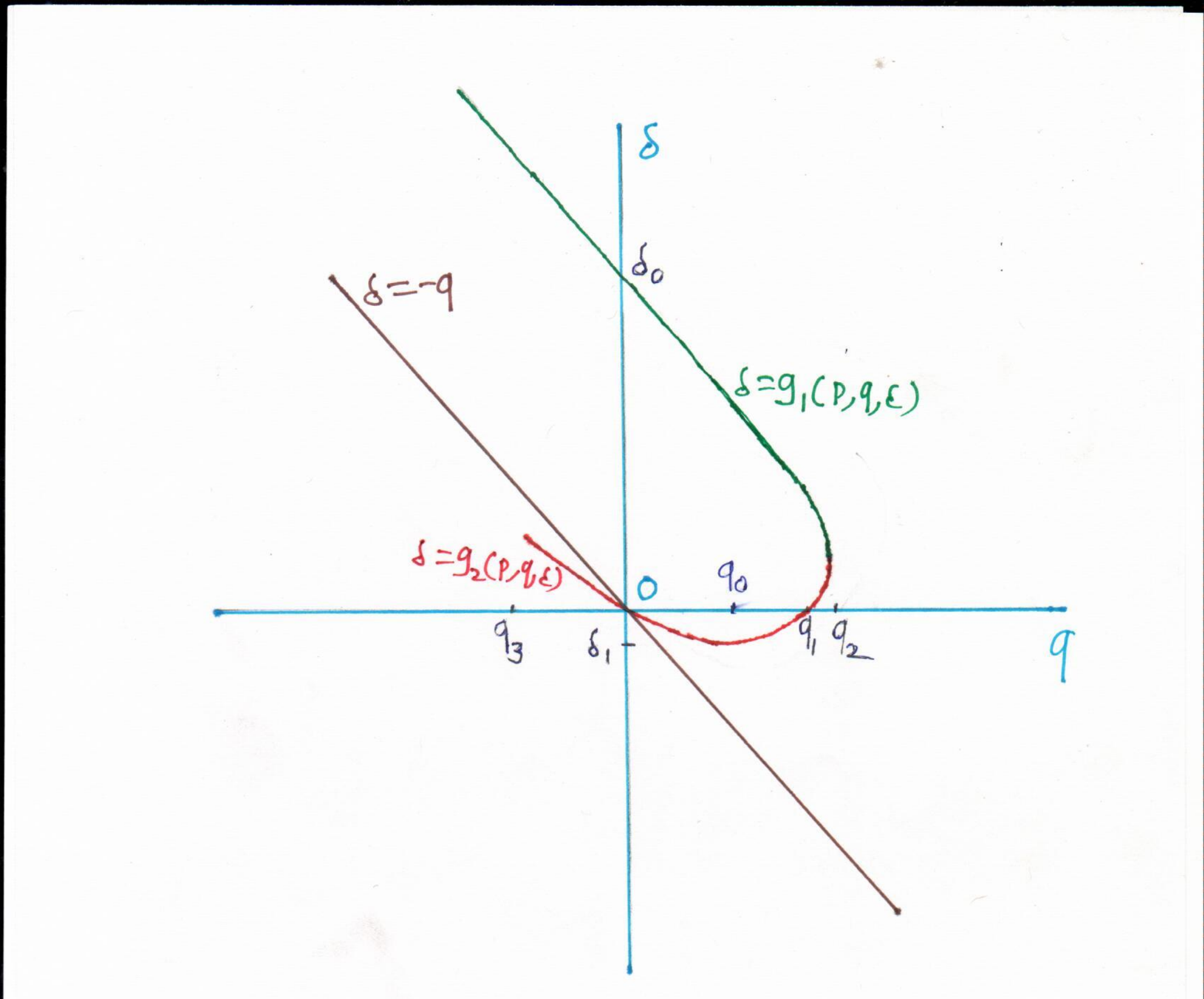}
\caption{ Stability region for $x_2^*$ in $q\delta-$plane}\label{fig1.5}
\end{figure}
 \begin{itemize}
  \item[(A)] If $q>q_1$ and $\delta>-q$, then $x_2^*$ is delay dependent stable.
  
   \item[(B)] If $-q<\delta< g_1(p,q,\epsilon)$ and $q<0$ then $x_2^*$ is asymptotically stable $\forall \tau \geqslant 0.$  
   
  \item[(C)] If $0\leq q \leq q_1$ and-\\
   \begin{itemize}
   \item[(i)] $\delta\in\Big(-q,g_2(p,q,\epsilon)\Big)\cup \Big(g_1(p,q,\epsilon),\infty\Big)$ then  $\exists$ $\tau_*$ as given in (\ref{zzzzz}) and $x_2^*$ is delay dependent stable.  \\
   \item[(ii)] $g_2(p,q,\epsilon)<\delta<g_1(p,q,\epsilon)$ then $x_2^*$ is asymptotically stable, $\forall \tau \geqslant 0$. \\
   \end{itemize}
   \end{itemize}
\section{Chaos}\label{sec1.3}
We observed chaotic oscillations in system (\ref{31}) for some parameter values. The Figure (\ref{57575757}) shows the bifurcation diagram for the parameter set $\delta=5$, $\epsilon=2$, $p=0.01$, $q=-2$ and $\alpha=0.95$. The horizontal axis is the delay $\tau$. Equilibrium points in this case are $x_1^*=0$, $x_2^*=1.22725$ and $x_3^*=-1.22725.$\\
The equilibrium point $x_1^*$ is unstable for all $\tau\geq 0$. For $x_2^*$, $b<a<0$. Therefore, there exists $\tau_*=0.6427665$ such that $x_2^*$ is asymptotically stable for $0<\tau<\tau_*$.\\
Similarly, for $x_3^*$, $b<a<0$ and $\tau_*=0.620447$. Thus, the system is unstable for $\tau>0.642766.$\\
We observed periodic limit cycles for $0.65\leq\tau\leq2.2$.\\
Figures (\ref{575757}) and (\ref{57575756}) show periodic limit cycles for $\tau=1.4$ and $\tau=1.95$ respectively.\\
Chaos is observed for $\tau>2.2$. Figures (\ref{5757575675}) and (\ref{5757575689}) show chaotic attractors for $\tau=2.3$ and $\tau=2.5$ respectively. The chaos is confirmed with bifurcation diagram (cf. Figure (\ref{57575757})) and  the positive values of maximum Lyapunov exponents (Table \ref{56565656}).
\begin{table}
\centering
\begin{tabular}{|l|p{3cm}|p{5cm}|}
\hline
$\tau$ & Maximum Lyapunov Exponents & Behaviour of System\\
\hline
0.6&-0.912265&Stable\\
\hline
1.6&-0.002104&  Limit cycle\\
\hline
1.8&-0.000428&  Limit cycle\\
\hline
2.3&0.546279 & Chaotic oscillations\\
\hline
2.5 & 1.083852 & Chaotic oscillations\\
\hline
\end{tabular}
\caption{ Maximum Lyapunov Exponents}\label{56565656}
\end{table}
We used the algorithm described by Kodba et al \cite{kodba2004detecting} which is based on the time series analysis techniques and the work by Wolf et al \cite{wolf1985determining}. 
\begin{figure}
 \centering
\includegraphics[width=0.6\textwidth]{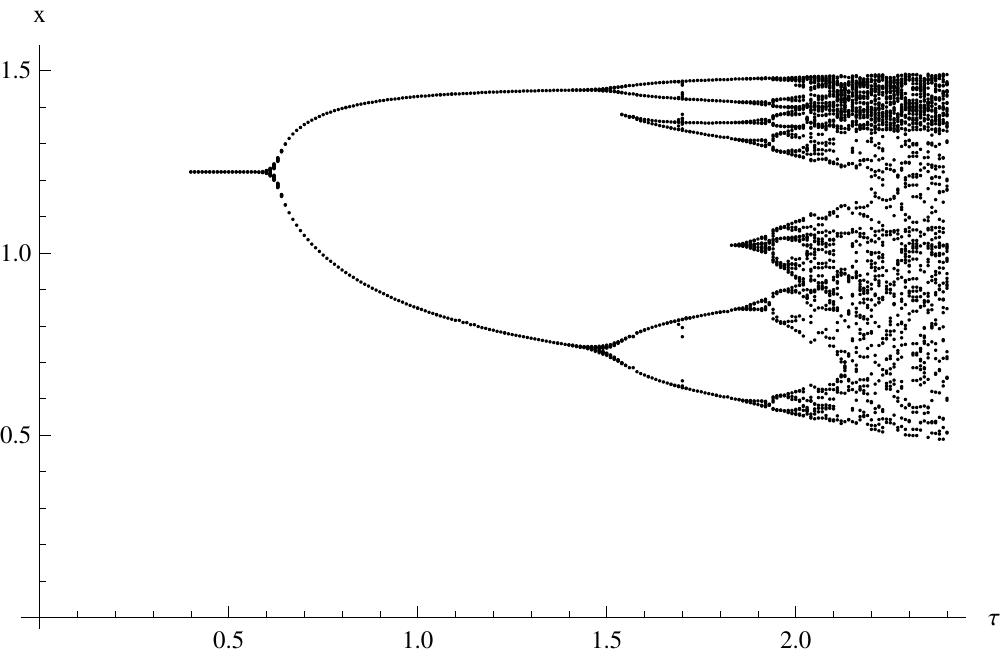}
\caption{ Bifurcation diagram}\label{57575757}
\end{figure}
\begin{figure}
 \centering
\includegraphics[width=0.6\textwidth]{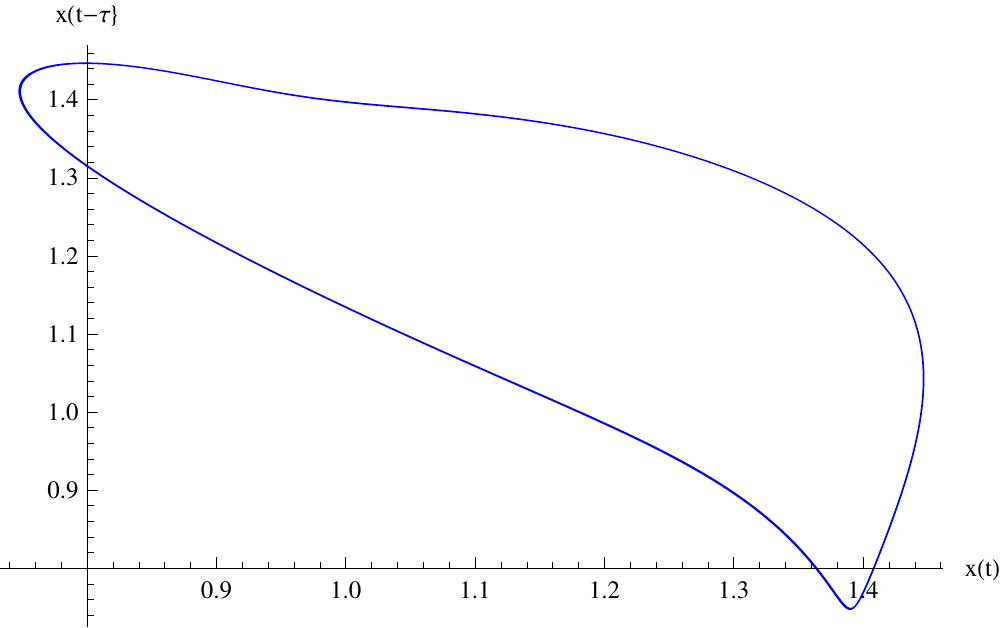}
\caption{  Graph of $x(t)$ versus $x(t-\tau)$ for  $\tau=1.4$  }\label{575757}
\end{figure}
\begin{figure}
 \centering
\includegraphics[width=0.6\textwidth]{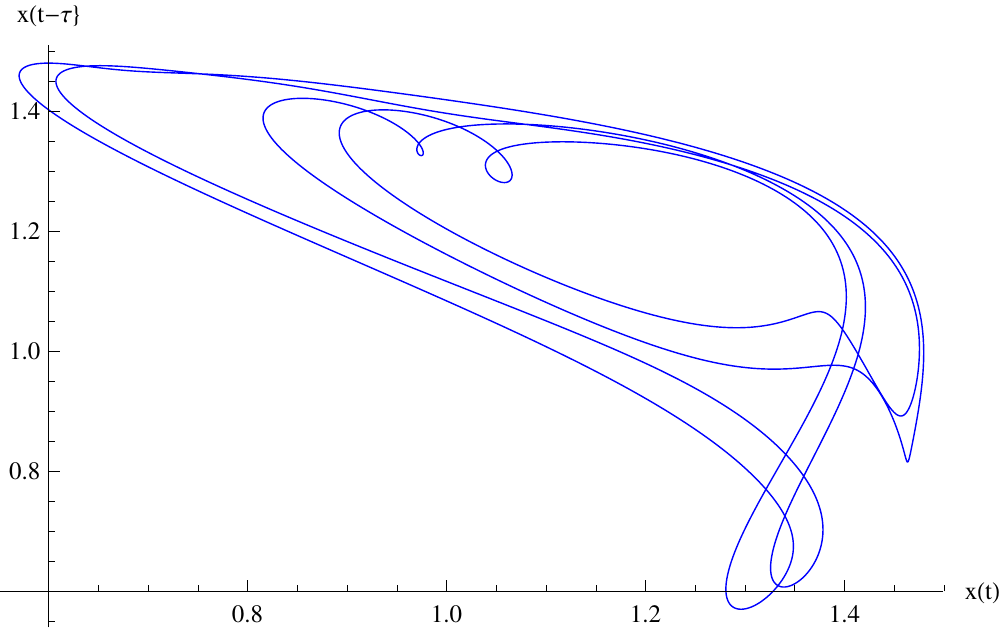}
\caption{ Periodic limit cycle for  $\tau=1.95$}\label{57575756}
\end{figure}
\begin{figure}
 \centering
\includegraphics[width=0.6\textwidth]{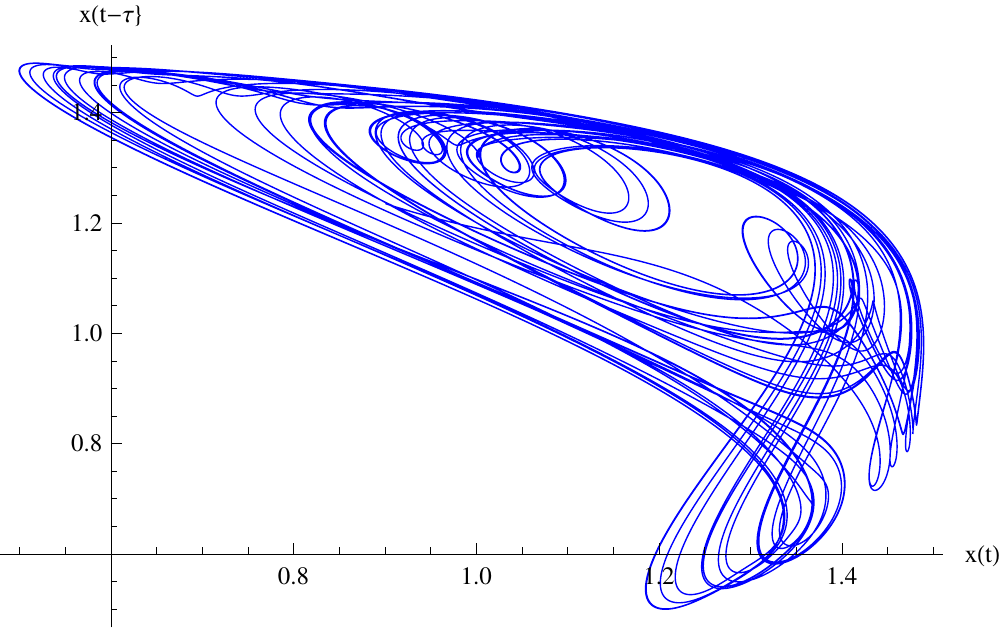}
\caption{ Chaotic attractor for  $\tau=2.3$}\label{5757575675}
\end{figure}
\begin{figure}
 \centering
\includegraphics[width=0.6\textwidth]{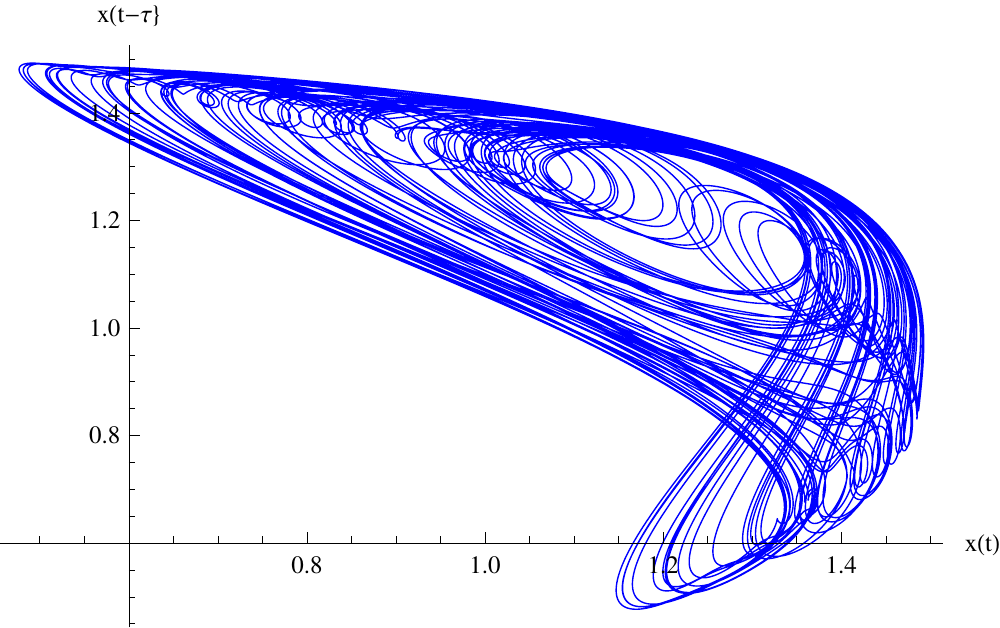}
\caption{ Chaotic attractor for  $\tau=2.5$ }\label{5757575689}
\end{figure}
\section{Conclusion}\label{sec1.4}
In this work, we considered a fractional order delay differential equation
\begin{equation*}
D^\alpha x(t)=\delta x(t-\tau)-\epsilon (x(t-\tau))^3-p(x(t))^2+qx(t). 
\end{equation*}
For some values of parameters, there are three equilibrium points viz. $x_1^*$, $x_2^*$ and $x_3^*$. We provided explicit stability conditions for equilibrium points $x_1^*$ and $x_2^*$. We proposed delay-dependent as well as delay-independent stability conditions. The results are verified by setting particular values to parameters. The key finding is to sketch the stable regions in the $q\delta-$plane which are valid for any $\epsilon>0$ and $p>0.$\\
It is observed that the system shows chaotic oscillations for some range of parameters. We provided the bifurcation diagram and the values of maximum Lyapunov exponents to confirm the chaos in this system.\\
The stability of $x_3^*$ can be done as a future work. 
\section{Acknowledgments}
S. Bhalekar acknowledges the University of Hyderabad for Institute of Eminence-Professional Development Fund (IoE-PDF) by MHRD (F11/9/2019-U3(A)).
D. Gupta thanks University Grants Commission for financial support (Ref.No.:201610026200).
\bibliographystyle{elsarticle-num}
\nocite{*}
\bibliography{bibtexpaper2}
\end{document}